\documentclass[11pt]{article}
\usepackage{graphicx} 
\usepackage{xcolor}
\usepackage{subcaption}
\usepackage{amsfonts, amsmath, amssymb, bbm}
\usepackage{caption, subcaption}
\usepackage{todonotes}
\usepackage{comment}
\usepackage[a4paper, total={6in, 10in}]{geometry}
\usepackage[backend=biber]{biblatex}
\usepackage{amsthm}
\usepackage{algpseudocode}
\usepackage{tabularx}
\usepackage{siunitx}
\usepackage{booktabs}
\usepackage{algorithm}
\usepackage[most]{tcolorbox}
\usepackage[colorlinks=true, allcolors=blue]{hyperref}
\newtheorem{theorem}{Theorem}[section]
\tcolorboxenvironment{theorem}{
  colback=gray!8,
  colframe=black,
  boxrule=0.8pt,
  arc=2mm,
  left=2mm,
  right=2mm,
  top=1mm,
  bottom=1mm
}
\newtheorem{prop}{Proposition}[section]
\tcolorboxenvironment{prop}{
  colback=gray!8,
  colframe=black,
  boxrule=0.8pt,
  arc=2mm,
  left=2mm,
  right=2mm,
  top=1mm,
  bottom=1mm
}

\newtheorem{lemma}[theorem]{Lemma}
\tcolorboxenvironment{lemma}{
  colback=gray!1,
  colframe=black,
  boxrule=0.8pt,
  arc=2mm,
  left=2mm,
  right=2mm,
  top=1mm,
  bottom=1mm
}

\title{A Discontinuous Galerkin discretization for the sea ice dynamics}
\author{Emma Lagracie, Thomas Richter}
\author{Emma Lagracie$^{1,} \thanks{emma.lagracie@ovgu.de}\text{ }$ and Thomas Richter$^{1, }\thanks{thomas.richter@ovgu.de}\text{ }$ \\
\ \\ 
$^1$ Otto von Guericke Universität, Magdeburg, Germany\\}

\newcommand{\bu}{\mathbf{u}}
\newcommand{\bv}{\mathbf{v}}
\newcommand{\bt}{\mathbf{t}}
\newcommand{\bI}{\mathbf{I}}

\newcommand{\sig}{\boldsymbol{\sigma}}
\newcommand{\btau}{\boldsymbol{\tau}}
\newcommand{\beps}{\boldsymbol{\varepsilon}}

\newcommand{\ljp}{ [\![ }
\newcommand{\rjp}{ ]\!] }

\newcommand{\Tr}{\text{Tr}}

\begin{document}
\maketitle

\begin{abstract}
    Sea ice dynamics plays a crucial role in the Earth's climate system, making it an important component of weather and climate prediction models. Its numerical simulation remains challenging, however, as it exhibits complex mechanical behaviors due to a nonlinear ice rheology and interactions with external physical forcings. In particular, sea ice  presents linear kinematic features (LKFs), i.e., narrow bands of intense deformation associated with processes such as lead opening or pressure-ridge formation. Accurately representing these features is necessary as they affect thermodynamics and ocean–atmosphere exchange. Yet their number, localization, and structure are highly sensitive to spatial resolution and to the chosen discretization of the sea ice velocity.

    In this work, we investigate the spatial discretization of Hibler’s viscous plastic sea-ice model using a fully discontinuous Galerkin (DG) representation of all variables, including the velocity field. The ability of DG elements to represent discontinuities while having high order local polynomial approximation makes them well suited for resolving the complex ice deformation features, and insuring robustness towards mesh-induced numerical artefacts. We assess the fully DG method using an established sea ice dynamics benchmark and compare the obtained sea ice deformation with state-of-the-art discretizations.

    Additionally, we study the theoretical convergence of the modified Elastic-Viscous-Plastic (mEVP) formulation as a pseudo-time iterative solver for the viscous-plastic (VP) momentum equations. We prove the convergence of the underlying continuous pseudo-time dynamical system towards the viscous plastic formal limit, thereby providing a theoretical foundation for the use of mEVP as an iterative solver for Hibler’s VP model.

    \paragraph{Keywords} Sea ice dynamics, mEVP, elastic-viscous-plastic model, Local Discontinuous Galerkin
    
\end{abstract}

\section{Introduction}

\paragraph{Sea ice dynamics and the need for accurate numerical models}
Sea ice plays a crucial role in the Earth's climate system, influencing the global energy budget through its high albedo, regulating ocean-atmosphere heat and mass exchanges, and governing large-scale circulation patterns in the polar oceans. Accurate numerical simulation of sea ice is therefore an important component of global climate models and regional ocean forecasting systems. At the same time, sea ice exhibits remarkably complex mechanical behavior: it deforms as a highly heterogeneous material, developing sharp localized structures such as leads --- open channels of water cutting through the ice pack --- and linear kinematic features (LKFs), which are narrow bands of intense deformation that govern much of the large-scale ice dynamics. These features emerge from the strongly nonlinear interaction between wind and ocean forcing, internal ice stress, and the thermodynamic evolution of the ice cover. Resolving them faithfully is one of the central challenges in sea ice numerics, requiring both high spatial resolution and discretizations that are sensitive to such localized physical phenomena. In this paper, we present a fully discontinuous Galerkin (DG) discretization of the viscous-plastic sea ice model, designed to address precisely these requirements.

\paragraph{The viscous-plastic sea ice model}
The most widely used model for sea ice rheology is the viscous-plastic (VP) model introduced by Hibler~\cite{Hibler1979,Hibler1980}, which describes sea ice as a two-dimensional continuum with a nonlinear viscous-plastic constitutive law. In this framework, internal ice stresses are determined by a pressure-dependent yield curve --- typically an elliptical yield curve in principal stress space --- and the ice deforms plastically when the stress state reaches the yield curve, while exhibiting a highly viscous (near-rigid) response in the interior. The governing equations couple the ice momentum balance, including wind and ocean drag and the Coriolis force, to transport equations for ice thickness and concentration that advect these scalar quantities with the ice velocity. Despite the apparent simplicity of its continuum formulation, the VP model poses severe numerical challenges. The constitutive law involves strongly nonlinear, deformation-rate-dependent viscosities that can vary by many orders of magnitude across the domain, giving rise to nearly singular operators and making the solution of the discrete system extremely difficult. Alternative rheological models have been proposed in recent years — most notably the Maxwell Elasto-Brittle (MEB) and Brittle Bingham Maxwell (BBM) models developed by Dansereau, Rampal, Olason and collaborators~\cite{Dansereau2016,Rampal2016,Olason2022}, which aim to better represent the fracture-like behavior of sea ice — but the VP model remains the dominant framework in operational and climate modeling due to its established physical basis, widespread implementation, and extensive validation record.

\paragraph{Solving the VP model: implicit and iterative solvers}
The difficulty of solving the VP momentum equation has driven sustained efforts in numerical solver development. The naive approach of fixed-point (Picard) iteration~\cite{ZhangHibler1997} converges slowly or not at all in the strongly nonlinear regime, and has long been recognized as inadequate for production use~\cite{LemieuxTremblay2009}. Significant progress has been achieved with fully implicit nonlinear solvers: Lemieux and Losch pioneered the use of Jacobian-free Newton–Krylov (JFNK) methods~\cite{Lemieux2012,Losch2014} for VP sea ice, demonstrating dramatically improved convergence properties over Picard iteration. Subsequently, modified Newton approach was developed~\cite{Mehlmann2017} that exploits the specific structure of the VP nonlinearity, and more recently Shih et al. proposed a primal-dual active set method that handles the plastic–viscous transition in a rigorous optimization framework~\cite{Shih2023}. These implicit solvers are capable of delivering accurate, well-converged solutions, but their practical deployment remains challenging: the underlying nonlinear systems are difficult to precondition effectively, the solvers require careful implementation and tuning, and maintaining robust, scalable code across different model configurations imposes a significant engineering burden. As a result, the dominant approach in operational practice remains a class of explicit-iterative methods. The most widely adopted is the elastic-viscous-plastic (EVP) scheme of Hunke and Dukowicz~\cite{HunkeDukowicz1997}, which introduces an artificial elastic term into the VP constitutive law, allowing the stress components to be treated as independent prognostic variables and the momentum equation to be integrated explicitly in pseudo-time. The modified EVP (mEVP) formulation of Bouillon and collaborators~\cite{Bouillon2013} later resolved the dependence on an explicit elastic wave speed by introducing a secondary pseudo-time scale, yielding a scheme that formally converges to VP solutions when a sufficient number of sub-cycling iterations are performed~\cite{Kimmritz2016,Kimmritz2017}. These EVP-type methods are computationally simple, easy to implement, and naturally parallelizable, but they require a large number of sub-iterations per physical time step to achieve adequate numerical convergence, and the required number grows with mesh refinement, making them increasingly expensive as resolution increases.

\paragraph{Theoretical convergence of the dynamical mEVP system}
The mEVP formulation can be interpreted as a time-discrete approximation of an EVP-like dynamical system, evolving in artificial pseudo-time over each fixed physical time step. Previous studies have investigated the numerical convergence and stability of the discrete mEVP system, establishing stability conditions for several spatial grid arrangements and demonstrating numerically that, provided a sufficiently large number of pseudo-time iterations is performed, the discrete mEVP solution converges toward its formal VP limit \cite{kimmritz2015convergence,Kimmritz2016}. However, to the best of our knowledge, the mathematical convergence of the underlying continuous pseudo-time dynamical system toward a stable equilibrium corresponding to the VP limit, independently of both the spatial and pseudo-time discretizations, has not previously been established.

\paragraph{Discretization and its effect on sea ice solutions}
Beyond the question of which solver is used, the spatial discretization has a profound influence on the quality of sea ice simulations, and in particular on the representation of leads and LKFs~\cite{MehlmannLKF2021}. These features are intrinsically sharp: leads are free surfaces across which the ice velocity may be nearly discontinuous, and LKFs are localized bands of plastic deformation with widths that may approach the grid scale. Systematic studies have demonstrated that standard finite difference and finite element discretizations on structured grids require very high resolution — on the order of a few kilometers or less — to begin resolving these structures, and that coarser models systematically underestimate deformation rates and smooth out the spatial intermittency of sea ice dynamics. The dependence on resolution and discretization choice has been quantified in dedicated LKF intercomparison studies by Mehlmann et al.~\cite{MehlmannLKF2021}, confirming that the statistical properties of LKFs, including their length distributions and occurrence frequencies, are strongly sensitive to grid spacing. Importantly, it has also been shown that the type of discretization matters beyond mere resolution: the use of non-conforming elements, such as Crouzeix–Raviart (CR) elements, which allow inter-element velocity discontinuities, leads to sharper and more physically consistent LKF structures compared to standard continuous Galerkin discretizations at the same resolution~\cite{Mehlmann2021}. The neXtSIM-DG project~\cite{nextsimdg} further demonstrated that increasing the polynomial order of the discretization, even at fixed mesh resolution, improves the representation of sea ice deformation features~\cite{richter2023dynamical}, suggesting that high-order methods offer an efficient path to improved solution quality without the full cost of mesh refinement.

\paragraph{Discontinuous Galerkin methods for sea ice}

The evidence from these studies points clearly toward discontinuous Galerkin (DG) methods as a natural framework for sea ice numerics~\cite{di2011mathematical}. DG methods combine high polynomial order with a completely local, element-wise approximation that allows inter-element discontinuities — precisely the combination indicated by the LKF studies~\cite{MehlmannLKF2021}. The discontinuous character of DG approximations is physically motivated: sea ice deformation is genuinely discontinuous at leads, and forcing the numerical solution to be globally continuous artificially constrains the solution space. The flexibility of DG discretizations also makes them well suited to the hyperbolic transport equations for ice thickness and concentration. Dansereau~\cite{Dansereau2016,Dansereau2017,Dansereau2021} was the first to systematically investigate DG for discretizing the tracer equations of sea ice, also exploiting the role of higher order. In the neXtSIM-DG model~\cite{nextsimdg}, DG has been used to discretize the tracer transport but also the stress and strain~\cite{richter2023dynamical} while retaining continuous finite elements for the velocity. To our knowledge, no fully DG formulation of the VP sea ice model — in which all fields, including velocity, are represented in discontinuous approximation spaces — has been previously reported.

\paragraph{Outline of this paper}
In this paper, we close this gap by presenting the first fully DG discretization of Hibler's VP sea ice model, based on the local discontinuous Galerkin (LDG) framework~\cite{cockburn1998local}. The key idea is to exploit the natural first-order structure of the VP constitutive law: by introducing the strain rate tensor as an independent variable, the momentum equation can be written as a first-order system in which both the velocity and the strain rate are coupled through the constitutive relation and the divergence operator. The LDG formulation discretizes this system directly, with velocity, strain rate, and stress all living in DG spaces, and achieves the necessary coupling between elements through numerical flux terms.  

In addition, we provide a mathematical justification for the convergence of the mEVP solver, by studying the underlying pseudo-time dynamical system. Using Lyapunov functionals, we prove the convergence of the mEVP velocity and stress towards the physical time discrete solution of the VP system, as pseudo time goes to infinity.

The paper is organized as follows. Section 2 introduces the sea ice model in VP and mEVP formulation as well as the necessary notation. Section 3 is dedicated to proving the convergence of the mEVP system towards its VP formal limit. Section 4 presents the mixed form DG discretization for Hibler's sea ice model, and numerical results on a classical benchmark are presented in Section 5. Section~6 concludes with a discussion and directions for future work.


\section{The sea ice model}

We consider the standard VP sea ice model introduced by Hibler~\cite{Hibler1979}, which describes the large-scale dynamics of sea ice as a two-dimensional viscous-plastic continuum on the surface of the ocean. The model consists of a momentum equation for the ice velocity $\bu : \Omega \to \mathbb{R}^2$, coupled to transport equations for mean ice thickness $H$ and ice concentration
$A$, defined on a two-dimensional domain $\Omega \subset \mathbb{R}^2$. The momentum equation reads
\begin{equation}
  \rho_{\mathrm{ice}} H \left( \partial_t \bu
  + f \vec k \times \bu \right)
  = \nabla \cdot \sig + A(\btau_a + \btau_o) + \rho_{\mathrm{ice}} H f \vec k \times \bu_o,
  \label{eq:momentum}
\end{equation}
where $\rho_{\mathrm{ice}}$ is the ice density, $H$ the mean ice thickness, $f$ the Coriolis parameter, and $\vec{k}$ the unit vertical vector. The right-hand side includes the divergence of the internal ice stress tensor $\sig$, the atmospheric wind stress $\btau_a$, and the ocean drag $\btau_o$ are given as
\begin{equation}
\btau_a = \rho_a C_a\|\bu_a-\bu\|_2(\bu_a-\bu),\quad
\btau_o = \rho_o C_o\|\bu_o-\bu\|_2(\bu_o-\bu),
\end{equation}
where $\bu_a$ and $\bu_o$ are wind velocity and ocean surface current and $C_a$ and $C_o$ a drag coefficients. Usually $\|\bu_a\|\gg \|\bu\|$ and often, the approximation $\btau_a=\|\bu_a\|\bu_a$ is considered. We will jointly write all forces that are not related to the internal stress as
\begin{equation}\label{forces}
F(\bu) = A(\btau_a(\bu)+\btau_o(\bu)) + \rho_\text{ice} H f\vec k\times(\bu_o-\bu).
\end{equation}
The transport equations for $H$ and $A$ take the standard advective form and are not the focus of this work; we refer
to~\cite{Hibler1979,Hibler1980} for their precise statement and both Section 4 and ~\cite{richter2023dynamical} for the higher order discontinuous Galerkin formulation.

\paragraph{Rheology}

The constitutive law relating the stress tensor $\sig$ to the
ice velocity is the VP rheology. The strain rate tensor is defined as
\begin{equation}
  \beps(\bu)
  = \frac{1}{2}\left(\nabla \bu+ (\nabla \bu)^T\right),
  \label{eq:strain}
\end{equation}
and the stress is determined by
\begin{equation}
  \sig
  = 2\eta\,\beps
  + (\zeta - \eta)(\Tr(\beps))\bI
  - \frac{P}{2}\bI,
  \label{eq:stress}
\end{equation}
where $\boldsymbol{I}$ denotes the identity tensor. The bulk and shear
viscosities $\zeta$ and $\eta$ are nonlinear functions of the strain rate,
given by
\begin{equation}
  \zeta = \frac{P}{2\Delta}, \qquad \eta = \frac{\zeta}{e^2},
  \label{eq:viscosities}
\end{equation}
where $e$ is the aspect ratio of the elliptical yield curve and $\Delta$ is the effective deformation rate
\begin{equation}
  \Delta = \left[
  \Delta_\text{min}^2 + 2e^{-2}\beps':\beps'+\Tr(\beps)^2\right]^{1/2},
  \label{eq:Delta}
\end{equation}
where $\beps' = \beps-\frac{1}{2}\text{Tr}(\beps)\,I$ is trace free, and $\Delta_\text{min}>0$ --- usually $\Delta_\text{min}=2\cdot 10^{-9}$ --- can either be considered a numerical stabilization parameter or the transition to the viscous regime~\cite{Hibler1979}, limiting the viscosity. 
The ice strength $P$ depends on the ice state through the empirical relation
\begin{equation}
  P = P^* H \exp\!\left(-C(1 - A)\right),
  \label{eq:icestrength}
\end{equation}
with material constants $P^*$ and $C$. Common choices for the physical parameters are given in Table~\ref{tab:mevp_parameters} in the numerics section. In the following, the VP stress tensor will be denoted $\sig^{VP}(\beps)$ to emphasize its nonlinear dependence on the velocity strain tensor.

\paragraph{mEVP reformulation}

The strong nonlinearity of the VP constitutive law makes direct solution of the momentum equation~\eqref{eq:momentum} challenging. Following Bouillon et al.~\cite{Bouillon2013}, we consider the modified elastic-viscous-plastic (mEVP) approach, which introduces an artificial elastic relaxation in pseudo-time to enable efficient explicit sub-cycling. The stress is promoted to an independent
variable and evolved according to
\begin{equation}
  \alpha \left(\sig^{k+1} - \sig^k\right)+ \sig^{k+1}
  = \sig^{VP}\!\left(\beps(\bu^k)\right),
  \label{eq:mevp_stress}
\end{equation}
where $k$ denotes the sub-iteration index and $\alpha > 0$ distinguishes the physical from the mEVP time scale. The momentum equation is updated simultaneously:
\begin{equation}
  \beta\,\rho_\text{ice}H\frac{\bu^{k+1} - \bu^k}{\Delta t}
  + \rho_{\mathrm{ice}} H\,\frac{\bu^{k+1} - \bu^n}{\Delta t}
  = \nabla \cdot \sig^{k+1} + F(\bu^k),
  \label{eq:mevp_momentum}
\end{equation}
where $\bu^n$ denotes the velocity at the previous physical time
step $\Delta t$ and $\beta > 0$ is a second stabilization parameter. Upon convergence as $k \to \infty$, both iterates formally recover the VP solution at the current physical time step. The parameters $\alpha$ and $\beta$ govern the convergence rate and must be chosen sufficiently large to ensure stability; we refer to~\cite{Bouillon2013,Kimmritz2016} for a detailed discussion. A key advantage of the mEVP formulation is that, unlike the original EVP scheme~\cite{HunkeDukowicz1997}, it does not introduce a physical elastic wave speed and the sub-iteration is a pure pseudo-time relaxation numerically converging to the VP solution \cite{kimmritz2015convergence}. A mathematical proof for the convergence of the mEVP system is given in the following section.


\section{Convergence of the dynamical mEVP system}

We consider the sea-ice velocity $\bu$ in the space of $L^2$-functions with $L^2$-gradient over the bounded domain $\Omega$, and subject to null Dirichlet boundary condition $(H^1_0(\Omega))^2$. The stress tensor $\sig$ is taken in the $(H^{\operatorname{div}}(\Omega))^{2\times2}$ space, with $H^{\operatorname{div}}(\Omega)):=\left \{ \sig \in L^2(\Omega)\, | \, \operatorname{div}(\sig) \in L^2(\Omega) \right \}$. In the following, we always assume that the $(H^1_0(\Omega))^2 \times (H^{\operatorname{div}}(\Omega))^{2\times2}$ solution of the viscous plastic limit system exist. We refer to \cite{liu2022well, brandt2022rigorous} for the analysis of the viscous plastic model, and to \cite{boutros2026global} for the analysis of the elastic-viscous-plastic model.

Taking $\alpha = \beta = \frac{1}{\tau}$ in \eqref{eq:mevp_stress} and \eqref{eq:mevp_momentum}, where $\tau$ is a pseudo-time, the mEVP equations  
clearly appear to be the pseudo-time explicit Euler discretization of the continuous dynamical system 
\begin{equation*}
\begin{aligned}
\rho_\text{ice} H \partial_\tau \bu+ \rho_\text{ice} H\bu-\Delta t \operatorname{div}(\sig)&=\Delta t\, F(\bu) + \rho_\text{ice} H\bu^{n-1}, \\
\partial_\tau \sig+\sig-\sig^{\mathrm{VP}}(\beps)&=0.
\end{aligned}
\end{equation*}Its formal limit, obtained for $\partial_\tau \bu = \partial_\tau \sig = 0$, is given by
\begin{equation*}
\begin{aligned}
\rho_\text{ice} H \bu^* -\Delta t \operatorname{div}(\sig^*)&=\Delta t\, F(\bu^*) + \rho_\text{ice} H\bu^{n-1}, \\
\sig^*-\sig^{\mathrm{VP}}(\beps^*)&=0,
\end{aligned}
\end{equation*} and corresponds to the VP momentum equation \eqref{eq:momentum} discretized in physical time $t$ with an implicit Euler scheme. 
The following convergence can be established:
\begin{theorem}[mEVP convergence]
\label{thm:VP cv}For external forcings of the form 
\[
F(\bu) = A \|\bu_o-\bu^n\|_2(\bu_o-\bu^n) + \tilde{f},
\]
the solution $(\bu(\tau), \sig(\tau)) \in (H^1_0(\Omega))^2 \times (H^{\operatorname{div}}(\Omega))^{2\times2}$ of the mEVP continuous pseudo time model
\begin{equation}
\begin{aligned}
&\rho_\text{ice} H \partial_\tau \bu+ \rho_\text{ice} H\bu-\Delta t \operatorname{div}(\sig)=\Delta t\, F(\bu) + \rho_\text{ice} H\bu^{n-1}, \\
&\partial_\tau \sig+\sig-\sig^{\mathrm{VP}}(\beps)=0,
\end{aligned}
\label{continuous-mEVP complete}
\end{equation}
converges for $\tau \rightarrow \infty$ in $L^2(\Omega) \times H^{\operatorname{div}}(\Omega)$ sense with exponential rate towards the solution $(\bu^*, \sig^*) \in (H^1_0(\Omega))^2 \times (H^{\operatorname{div}}(\Omega))^{2\times2}$ of the semi-discretized VP model with  time step $\Delta t$, taken at physical time $t^n$:
\begin{equation}
\begin{aligned}
&\rho_\text{ice} H \bu^* -\Delta t \operatorname{div}(\sig^*)=\Delta t\, F(\bu^*) + \rho_\text{ice} H\bu^{n-1}, \\
&\sig^*-\sig^{\mathrm{VP}}(\beps^*)=0.
\end{aligned}
\label{semi-discrete VP limit}
\end{equation} 
\end{theorem}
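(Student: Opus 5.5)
We work with the errors $\delta\bu=\bu(\tau)-\bu^*$ and $\delta\sig=\sig(\tau)-\sig^*$. With the forcing of the stated form, $F$ does not depend on the unknown $\bu$, so $F(\bu)=F(\bu^*)$. Subtracting \eqref{semi-discrete VP limit} from \eqref{continuous-mEVP complete} gives the error system
\begin{equation*}
\begin{aligned}
&\rho_\text{ice} H\,\partial_\tau \delta\bu+\rho_\text{ice} H\,\delta\bu-\Delta t\operatorname{div}(\delta\sig)=0,\\
&\partial_\tau\delta\sig+\delta\sig-\bigl(\sig^{\mathrm{VP}}(\beps(\bu))-\sig^{\mathrm{VP}}(\beps^*)\bigr)=0 .
\end{aligned}
\end{equation*}
The plan is a Lyapunov argument for the energy
\[
E(\tau)=\tfrac12\|\sqrt{\rho_\text{ice}H}\,\delta\bu\|^2+\tfrac{\Delta t}{2}\|\delta\sig\|^2 .
\]
We assume $H$ is bounded above and away from zero.

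\textbf{Step 1: basic energy estimate.} Test the first error equation with $\delta\bu$ and integrate by parts. Since $\delta\bu\in H^1_0$, this gives $\Delta t(\delta\sig,\beps(\delta\bu))$, using the symmetry of $\sig$. Test the second equation with $\Delta t\,\delta\sig$. Adding the two identities, we obtain
\[
\tfrac{d}{d\tau}E+\|\sqrt{\rho_\text{ice}H}\,\delta\bu\|^2+\Delta t\|\delta\sig\|^2
=\Delta t\bigl(\sig^{\mathrm{VP}}(\beps)-\sig^{\mathrm{VP}}(\beps^*),\delta\sig\bigr)-\Delta t(\delta\sig,\beps(\delta\bu)).
\]

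\textbf{Step 2: the key structural property.} We need Lipschitz continuity of $\sig^{\mathrm{VP}}$ with respect to $\beps$. Write $\sig^{\mathrm{VP}}=\frac{P}{2}\bigl(\nabla_{\beps}\Delta(\beps)-\bI\bigr)$. This uses that $\Delta$ is a Euclidean-type norm of $(\Delta_\text{min},\beps',\Tr\beps)$ with weights, so that $\frac{2\eta\beps+(\zeta-\eta)\Tr(\beps)\bI}{P/2}$ is exactly the gradient of the convex function $\Delta$. This gradient is bounded, with $|\sig^{\mathrm{VP}}|\lesssim P/e$, and monotone. Its Hessian is bounded by $C/\Delta_\text{min}$, so $\sig^{\mathrm{VP}}$ is Lipschitz with a constant $L\sim P/(e^2\Delta_\text{min})$.

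\textbf{Step 3: closing the estimate, the main obstacle.} The Lipschitz bound alone controls $\sig^{\mathrm{VP}}(\beps)-\sig^{\mathrm{VP}}(\beps^*)$ by $\beps(\delta\bu)$. That quantity is not controlled by the $L^2$ dissipation $\|\delta\bu\|^2$. The cross term $-(\delta\sig,\beps(\delta\bu))$ also cannot be absorbed directly. The remedy I would try first is to exploit monotonicity through a modified functional. Replace $\Delta t\|\delta\sig\|^2/2$ by a Bregman-type distance built from the convex potential $\Phi(\beps)=\frac{P}{2}(\Delta(\beps)-\Tr\beps)$, for which $\sig^{\mathrm{VP}}=\nabla\Phi$. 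Alternatively, add a small cross term $\epsilon(\delta\bu,\operatorname{div}\delta\sig)$ to $E$, as in hypocoercivity arguments. This cross term produces $-\epsilon\Delta t\|\operatorname{div}\delta\sig\|^2$ from the first equation, which is exactly what yields the $H^{\operatorname{div}}$ control claimed in the theorem. The troublesome terms are then handled by Young's inequality, with $\epsilon$ chosen small relative to $\rho_\text{ice}H$ and $L$.

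\textbf{Step 4: conclusion.} Once we have an inequality of the form $\frac{d}{d\tau}\tilde E\le-c\,\tilde E$, with $\tilde E$ equivalent to $\|\delta\bu\|^2+\|\delta\sig\|^2+\|\operatorname{div}\delta\sig\|^2$, Grönwall's lemma gives exponential decay. This is convergence in $L^2\times H^{\operatorname{div}}$ with rate $c$.

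I expect Step 3, obtaining a coercive sign for the mixed velocity–stress terms, to require genuine care. If it cannot be closed in general, a smallness condition on $\Delta t\,L/(\rho_\text{ice}H)$ may be needed.
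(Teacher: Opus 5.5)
Your proposal has a genuine gap, and it is the one you flagged yourself: Step 3 is never closed, and neither remedy works as sketched.

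The base energy $E$ is not dissipative to begin with. The right-hand side of Step 1 contains $(\delta\sig,\beps(\delta\bu))$ and $(\sig^{\mathrm{VP}}(\beps)-\sig^*,\delta\sig)$. Both are sign-indefinite and involve $\beps(\delta\bu)$, which nothing dissipates. For a linear law $\sig^{\mathrm{VP}}=\mathbb L\beps$ the cross terms cancel if $\sig$ is measured in the $\mathbb L^{-1}$-weighted norm, as in the paper's appendix stability estimate; for the VP law this cancellation is lost.

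Adding $\epsilon(\delta\bu,\operatorname{div}\delta\sig)$ to a non-dissipative functional does not repair this. With $\rho_\text{ice}H=\Delta t=1$, its derivative is $\|\operatorname{div}\delta\sig\|^2+2(\beps(\delta\bu),\delta\sig)-(\beps(\delta\bu),\sig^{\mathrm{VP}}(\beps)-\sig^*)$. So the monotone term has the favourable sign exactly when $\|\operatorname{div}\delta\sig\|^2$ has the unfavourable one, and the uncontrolled $(\beps(\delta\bu),\delta\sig)$ survives.

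Replacing $\frac{\Delta t}2\|\delta\sig\|^2$ by $\int_\Omega D_\Phi(\beps,\beps^*)$ alone fails too. Its derivative $(\sig^{\mathrm{VP}}(\beps)-\sig^*,\beps(\partial_\tau\bu))$ is not controlled by anything in your functional. A smallness condition on $\Delta t\,L/(\rho_\text{ice}H)$ is not in the theorem, and with $L\sim P/\Delta_{\min}$ it would force absurdly small time steps.

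The missing idea is to eliminate $\sig$ altogether:
\begin{itemize}
\item Differentiate the momentum equation in $\tau$, substitute $\operatorname{div}\partial_\tau\sig=\operatorname{div}\sig^{\mathrm{VP}}(\beps)-\operatorname{div}\sig$, and take $\operatorname{div}\sig$ once more from the momentum equation. This gives the closed damped inertial gradient system $\partial_{\tau\tau}\bu+2\partial_\tau\bu+\nabla\mathcal J(\bu)-D_{\bu}F(\bu)\,\partial_\tau\bu=0$. Here $\mathcal J(\bu)=\frac12\|\bu\|^2+\int_\Omega\mathcal V(\bu)+\int_\Omega\Phi(\beps(\bu))$ is strongly convex and minimized by $\bu^*$, and $\sig^{\mathrm{VP}}=\nabla_{\beps}\Phi$.
\item Testing with $\partial_\tau\bu$ shows that $\frac12\|\partial_\tau\bu\|^2+\mathcal J(\bu)-\mathcal J(\bu^*)$ is non-increasing. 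Adding $\eta(\bu-\bu^*,\partial_\tau\bu)+\eta\|\bu-\bu^*\|^2$ yields exponential decay.
\end{itemize}

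Both of your ingredients do appear here. $\mathcal J(\bu)-\mathcal J(\bu^*)$ contains $\int_\Omega D_\Phi(\beps,\beps^*)$. Since $\partial_\tau\bu=\operatorname{div}\delta\sig-\delta\bu+F(\bu)-F(\bu^*)$, your cross term is essentially the $\eta$-term. But they only close on top of the kinetic term $\frac12\|\partial_\tau\bu\|^2$. Through the second-order equation, its evolution exactly balances $(\sig^{\mathrm{VP}}(\beps)-\sig^*,\beps(\partial_\tau\bu))$.

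The stress is then recovered by a cascade, not from one functional equivalent to the full $L^2\times H^{\operatorname{div}}$ error:
\begin{itemize}
\item Co-coercivity of the Lipschitz gradient $\nabla_{\beps}\Phi$ gives $\frac1{2L}\|\sig^{\mathrm{VP}}(\beps)-\sig^*\|^2\le\mathcal J(\bu)-\mathcal J(\bu^*)$.
\item The linear relaxation equation then gives $L^2$ decay of $\delta\sig$.
\item $\operatorname{div}\delta\sig$ is read off from the momentum equation.
\end{itemize}

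Finally, setting $F(\bu)=F(\bu^*)$ is a misreading. The statement is posed at $t^n$ with old step $\bu^{n-1}$, so $\bu^n$ in the forcing is the unknown. The ocean drag is therefore a genuine nonlinearity, with convex potential $\frac13\|\bu-\bu_o\|_2^3$. It enters $\mathcal J$, and absorbing the resulting Hessian cross term requires the extra bound $(\nabla^2\mathcal V(\bu)(\bu-\bu^*),\bu-\bu^*)\le M(\mathcal J(\bu)-\mathcal J(\bu^*))$.
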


In order to alleviate the computations, we will simplify the system \eqref{continuous-mEVP complete} into
\begin{equation}
\begin{aligned}
\partial_\tau \bu+ \bu- \operatorname{div}(\sig)&=F(\bu), \\
\partial_\tau \sig+\sig-\sig^{\mathrm{VP}}(\beps)&=0,
\end{aligned}
\label{continuous-mEVP short}
\end{equation}and the VP limit system \eqref{semi-discrete VP limit} into 
\begin{equation}
\begin{aligned}
\bu^* -\operatorname{div}(\sig^*)&=F(\bu^*), \\
\sig^*-\sig^{\mathrm{VP}}(\beps^*)&=0,
\end{aligned}
\label{semi-discrete VP limit short}
\end{equation}
which admit the same properties.

\paragraph{Remark} \textit{At physical time step $t^n$, the external forcings for the momentum equation
\begin{multline}
F(\bu^n) = A^n\big(\rho_a C_a\|\bu_a-\bu^n\|_2(\bu_a-\bu^n)+ \rho_o C_o\|\bu_o-\bu^n\|_2(\bu_o-\bu^n) \big)\\
+ \rho_\text{ice} H f\vec k\times(\bu_o-\bu^n)
\end{multline}
can be approximated by the following semi explicit formulation
\begin{equation}\label{approx forces}
F(\bu^n) \approx A\big (\rho_a C_a\|\bu_a\|_2\bu_a+ \rho_o C_o\|\bu_o-\bu^n\|_2(\bu_o-\bu^n) \big ) 
+ \rho_\text{ice} H f\vec k\times(\bu_o-\bu^{n-1}),
\end{equation}
which falls within the scope of Theorem \ref{thm:VP cv}.}

Before giving the proof of the theorem, we will show a coupled of auxiliary results. 
Lemmas~\ref{lemma:VP potential}-\ref{lemma: VP functional} are essentially also found in Shih et al.~\cite{Shih2023}, where the approach via a minimization problem served as the basis for an efficient Newton method. We present them along with detailed proofs, since the results and the notation introduced form the basis for Theorem~\ref{thm:VP cv}.

\begin{lemma}[VP potential]
\label{lemma:VP potential}
    The viscous plastic (VP) law $\beps \mapsto \sig^{VP}(\beps)$ derives from the VP potential
    \begin{equation}
        \Phi(\beps)=\frac{P}{2}\Delta(\beps)- \frac{P}{2}\operatorname{Tr}(\beps),
    \end{equation}
such that
\begin{equation}
    \nabla_{\beps} \Phi(\beps)=\sig^{VP}(\beps).
\end{equation}
\end{lemma}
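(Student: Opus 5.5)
The plan is a direct differentiation of $\Phi$ with respect to $\beps$, treating $P$ as independent of $\beps$ (it depends only on $H$ and $A$). I first rewrite $\Delta$ in a form that is easy to differentiate. Since $\beps' = \beps - \tfrac12\Tr(\beps)\bI$ and $\bI:\bI = 2$ in two dimensions, I have $\beps':\beps' = \beps:\beps - \tfrac12\Tr(\beps)^2$. Hence
\begin{equation*}
\Delta(\beps)^2 = \Delta_\text{min}^2 + 2e^{-2}\,\beps:\beps + (1-e^{-2})\,\Tr(\beps)^2 .
\end{equation*}
Because $\Delta_\text{min}>0$, $\Delta$ is smooth and strictly positive, so $\Phi$ is differentiable everywhere.

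Next I compute the gradient using the standard identities $\nabla_{\beps}(\beps:\beps) = 2\beps$ and $\nabla_{\beps}\Tr(\beps) = \bI$. From $\nabla_\beps \Delta = \frac{1}{2\Delta}\nabla_\beps(\Delta^2)$ I obtain
\begin{equation*}
\nabla_{\beps}\Delta = \frac{1}{\Delta}\Big(2e^{-2}\beps + (1-e^{-2})\Tr(\beps)\bI\Big).
\end{equation*}
Multiplying by $P/2$ gives
\begin{equation*}
\nabla_\beps \Phi = \frac{P}{2\Delta}\Big(2e^{-2}\beps + (1-e^{-2})\Tr(\beps)\bI\Big) - \frac{P}{2}\bI .
\end{equation*}

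Finally I identify the coefficients using \eqref{eq:viscosities}, namely $\zeta = P/(2\Delta)$ and $\eta = \zeta e^{-2}$. The coefficient of $\beps$ is $\frac{P}{2\Delta}\cdot 2e^{-2} = 2\eta$. The coefficient of $\Tr(\beps)\bI$ is $\zeta - \zeta e^{-2} = \zeta-\eta$. This reproduces exactly \eqref{eq:stress}, i.e.\ $\nabla_\beps\Phi = \sig^{VP}(\beps)$.

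There is no real obstacle. The only points needing care are the trace-free identity, which uses the dimension $2$ through $\Tr(\bI)=2$, and keeping the factor $\tfrac12$ in the chain rule for $\Delta$. One should also state that the gradient is taken with respect to the Frobenius inner product on matrices. If needed, one can restrict to symmetric $\beps$, where the same formula holds because the computed gradient is already symmetric.
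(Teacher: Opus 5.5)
Your proof is correct and follows essentially the same route as the paper: direct differentiation of $\Delta$ by the chain rule, with $P$ held fixed. The only cosmetic difference is that you eliminate the deviator first, using $\beps':\beps' = \beps:\beps - \tfrac12\Tr(\beps)^2$, and then match coefficients with $2\eta$ and $\zeta-\eta$ explicitly. The paper instead differentiates $\beps':\beps'$ directly, uses $\beps':h' = \beps':h$ (since $\beps'$ is trace free), and stops at $\frac{P}{2\Delta}\bigl[\Tr(\beps)\bI + 2e^{-2}\beps'\bigr] - \frac{P}{2}\bI$, leaving the identification with \eqref{eq:stress} implicit.
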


\begin{proof}
The differential of $\Delta(\beps)=
\sqrt{\Delta_{\min}^2+\operatorname{Tr}(\beps)^2+2e^{-2}\,\beps':\beps'}$ with respect to $\beps$ writes
\begin{equation*}
D_{\beps} \Delta(\beps)\cdot h=
\frac{\operatorname{Tr}(\beps)\operatorname{Tr}(h)
+2e^{-2}\beps':h'}{\Delta(\beps)}.
\end{equation*}
Considering $\operatorname{Tr}(\beps)\operatorname{Tr}(h)
=\bigl(\operatorname{Tr}(\beps)I\bigr):h$,
since $I:h=\operatorname{Tr}(I^T h)=\operatorname{Tr}(h), $ and again
$
\beps':h'=\beps':h-\frac12 \beps':\bigl(\operatorname{Tr}(h)I\bigr) = \beps':h
$ since $\beps'$ is trace free, we obtain
$D_{\beps} \Delta(\beps)\cdot h
=
\frac{1}{\Delta(\beps)}
\left[
\operatorname{Tr}(\beps)I
+
2e^{-2}\beps'
\right]:h$. 
Now considering the complete potential  $
\Phi(\beps)
=
\frac{P}{2}\Delta(\beps)
-
\frac{P}{2}\operatorname{Tr}(\beps),
$
we have
$
D_{\beps}\left[
\frac{P}{2}\Delta(\beps)
\right]\cdot h=
\frac{P}{2\Delta(\beps)}
\left[
\operatorname{Tr}(\beps)I+2e^{-2}\beps'
\right]:h,
$ and 
$D_{\beps}\left[
\frac{P}{2}\operatorname{Tr}(\beps)
\right]\cdot h=\frac{P}{2}\operatorname{Tr}(h)=
\frac{P}{2}I:h,
$  thus
\begin{equation}
\nabla_{\beps} \Phi(\beps)
=\frac{P}{2\Delta(\beps)}\left[\operatorname{Tr}(\beps)I
+2e^{-2}\beps'\right]-\frac{P}{2}I = \sig^{VP}(\beps).
\end{equation}
\end{proof}

\begin{lemma}[Convexity of $\Phi(\epsilon)$]
\label{lemma: convexity of phi}
    The VP potential $\Phi(\beps)=\frac{P}{2}\Delta(\beps)- \frac{P}{2}\operatorname{Tr}(\beps)$ is a convex function of $\beps$.
\end{lemma}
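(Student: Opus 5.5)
The plan is to split $\Phi$ into a linear part and a Euclidean norm of a linear image of $\beps$, and then use that convex functions are stable under these operations. The term $-\frac{P}{2}\operatorname{Tr}(\beps)$ is linear in $\beps$, hence convex; here $P\ge 0$ is treated as a fixed nonnegative coefficient, since it depends on $H$ and $A$ but not on $\beps$. It therefore suffices to show that $\beps\mapsto\Delta(\beps)$ is convex and then multiply by $P/2\ge 0$.

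For $\Delta$, I would define the linear map $L:\mathbb{R}^{2\times 2}_{\mathrm{sym}}\to \mathbb{R}\times\mathbb{R}\times\mathbb{R}^{2\times2}$ by
\[
L(\beps)=\bigl(0,\ \operatorname{Tr}(\beps),\ \sqrt{2}\,e^{-1}\beps'\bigr),
\]
and the constant $c=(\Delta_{\min},0,0)$. Since $\beps\mapsto\beps'=\beps-\frac12\operatorname{Tr}(\beps)\bI$ is linear, $\beps\mapsto c+L(\beps)$ is affine. With the Euclidean norm $|x|^2=x_1^2+x_2^2+x_3:x_3$ on the product space, we get
\[
\Delta(\beps)=\bigl|c+L(\beps)\bigr|.
\]
A norm composed with an affine map is convex, by the triangle inequality and homogeneity: for $\lambda\in[0,1]$,
\[
|c+L(\lambda\beps_1+(1-\lambda)\beps_2)|=|\lambda(c+L\beps_1)+(1-\lambda)(c+L\beps_2)|\le\lambda\Delta(\beps_1)+(1-\lambda)\Delta(\beps_2).
\]
Summing the two convex pieces gives convexity of $\Phi$.

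As a cross-check, or as an alternative route that gives quantitative information later, one can compute the second derivative using the gradient found in Lemma~\ref{lemma:VP potential}. Writing $g(\beps)=\operatorname{Tr}(\beps)\bI+2e^{-2}\beps'$, we have
\[
D^2\Delta[h,h]=\frac{\operatorname{Tr}(h)^2+2e^{-2}h':h'}{\Delta}-\frac{(g:h)^2}{\Delta^3}.
\]
Applying Cauchy--Schwarz in the weighted inner product $\langle a,b\rangle=\operatorname{Tr}(a)\operatorname{Tr}(b)+2e^{-2}a':b'$ gives $(g:h)^2\le(\Delta^2-\Delta_{\min}^2)\,\langle h,h\rangle$. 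Hence
\[
D^2\Delta[h,h]\ge\frac{\Delta_{\min}^2}{\Delta^3}\langle h,h\rangle\ge0.
\]
This even shows strict convexity on symmetric tensors, which may be useful for the uniqueness arguments in Theorem~\ref{thm:VP cv}.

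There is no real obstacle here. The only care needed is to confirm that $P\ge0$ is independent of $\beps$, and to note that the weighted form $\langle\cdot,\cdot\rangle$ is positive definite. This holds because $\operatorname{Tr}(h)=0$ and $h'=0$ together force $h=0$.
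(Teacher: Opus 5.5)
Your proof is correct and essentially the same as the paper's: both write $\Delta$ as a Euclidean norm of $(\Delta_{\min}, L\beps)$, conclude convexity by the triangle inequality, and add the linear trace term. Treating $\Delta_{\min}$ as a constant component of an affine map lets you skip the paper's monotonicity step for $\varphi(s)=\|(\Delta_{\min},s)\|_2$; your Hessian bound $D^2\Delta[h,h]\ge \Delta_{\min}^2\Delta^{-3}\langle h,h\rangle$ is a correct extra that gives strict convexity, which the paper does not need here.
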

\begin{proof}
Define the linear map
$$L\beps=
\left(\operatorname{Tr}\beps, \sqrt{2e^{-2}}\beps'\right).
$$
Then 
$\Delta(\beps) = \sqrt{\Delta_\text{min}^2 + \|L\beps\|^2} =
\left\|\left (\Delta_{\min},\|L\beps\| \right )\right\|_2$, with $\| . \|$ the product norm over the space $\mathbb{R} \times\mathbb{M}^2(\mathbb{R})$. Let \(\beps_1,\beps_2\) be two tensors in $\mathbb{M}^2(\mathbb{R})$ and let
\(0\leq \theta\leq 1\). We have
\[
\| L(\theta\beps_1+(1-\theta)\beps_2)\|
\leq
\theta \|L\beps_1\|+(1-\theta)\|L\beps_2 \|.
\]
The function $\varphi : s >0 \mapsto \|(\Delta_{\min}, s)\|_2$ is increasing and positive. Thus 
$$\varphi(\| L(\theta\beps_1+(1-\theta)\beps_2)\|)
\leq
\varphi(\theta \|L\beps_1\|+(1-\theta)\|L\beps_2 \|),
$$
and
$$
\begin{aligned}
\Delta(\theta \beps_1 + (1-\theta)\beps_2) &= 
\left\|\left (\Delta_{\min},\| \theta L\beps_1+(1-\theta)L\beps_2 \| \right )\right\|_2\\
& \leq \left\|\left (\theta \Delta_{\min} + (1-\theta ) \Delta_{\min}, \theta \| L\beps_1 \| +(1-\theta) \| L\beps_2 \| \right )\right\|_2\\
& \leq \theta \Delta (\beps_1) + (1-\theta) \Delta (\beps_2),
\end{aligned}
$$and $\Delta$ is a convex function. Consequently, $\beps \mapsto \Phi(\beps) $ is also convex as a sum of two convex functions.

\end{proof}

\begin{lemma}[Forcings potential]
\label{lemma:Forcing potential}
    The external forcings $\bu \mapsto -F(\bu)$ derive from the convex potential
    \begin{equation}
        \mathcal{V}(\bu)= \frac{1}{3} \|\bu-\bu_o\|_2^3 - f\cdot \bu.
    \end{equation}
\end{lemma}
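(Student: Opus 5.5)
The plan is to compute the gradient of $\mathcal{V}$ directly and then check convexity of each term separately. Throughout, $F$ is taken in the form relevant for Theorem~\ref{thm:VP cv} after the normalization to \eqref{continuous-mEVP short}: a quadratic drag term $\|\bu_o-\bu\|_2(\bu_o-\bu)$ plus a vector $f$ collecting all contributions that do not depend on the current iterate $\bu$ (wind stress $\|\bu_a\|\bu_a$, the explicit Coriolis term of \eqref{approx forces}, $\tilde f$). Coefficients such as $A\rho_oC_o$ are normalized to one, or carried as a positive pointwise multiplier, which does not affect the argument. So the claim to prove is $\nabla_{\bu}\mathcal{V}(\bu) = -F(\bu) = \|\bu-\bu_o\|_2(\bu-\bu_o) - f$, together with convexity of $\mathcal{V}$ on $\mathbb{R}^2$, pointwise in $\Omega$.

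\textbf{Step 1 (gradient).} Write $\bv=\bu-\bu_o$ and $g(\bv)=\frac13\|\bv\|_2^3=\frac13(\bv\cdot\bv)^{3/2}$. For $\bv\neq 0$ the chain rule gives
\[
D g(\bv)\cdot h=\tfrac12(\bv\cdot\bv)^{1/2}\,2\,\bv\cdot h=\|\bv\|_2\,\bv\cdot h .
\]
At $\bv=0$ we have $g(h)=O(\|h\|^3)$, so $g$ is differentiable there with zero gradient, matching the formula. The linear term contributes $-f$. Hence $\nabla\mathcal{V}(\bu)=\|\bu-\bu_o\|_2(\bu-\bu_o)-f=-F(\bu)$.

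\textbf{Step 2 (convexity).} The term $-f\cdot\bu$ is affine, hence convex. The map $\bv\mapsto\|\bv\|_2$ is convex and nonnegative, and $s\mapsto s^3/3$ is convex and nondecreasing on $[0,\infty)$. Their composition is therefore convex, as in the monotone-composition argument of Lemma~\ref{lemma: convexity of phi}. Translating by $\bu_o$ preserves convexity, and the sum of convex functions is convex.

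Alternatively, one can compute the Hessian $\|\bv\|\bI+\bv\otimes\bv/\|\bv\|$ for $\bv\neq0$, which is positive semidefinite. This even gives strong monotonicity on bounded sets away from the origin, which will be useful later in Theorem~\ref{thm:VP cv}.

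\textbf{Main obstacle.} The only delicate points are the nonsmoothness at $\bu=\bu_o$, handled above, and the bookkeeping that justifies treating $f$ as independent of $\bu$. The latter relies on the semi-explicit approximation \eqref{approx forces}: because the Coriolis term, which is not a gradient (it is skew), is evaluated at $\bu^{n-1}$, it becomes part of the constant $f$. This is exactly why the lemma, and the theorem, require that form of $F$.
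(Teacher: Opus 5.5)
Your proof is correct and follows the same route as the paper, which computes $\nabla_{\bu}\mathcal{V}(\bu)=\|\bu-\bu_o\|_2(\bu-\bu_o)-f=-F(\bu)$ directly and then invokes convexity of a composition of convex functions. Your version is more careful on two points the paper leaves implicit: you check differentiability at $\bu=\bu_o$, and you state that $s\mapsto s^3/3$ is nondecreasing on $[0,\infty)$, which the composition rule actually requires.
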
 
\begin{proof}
    Similarly to the previous proofs, we have that $\nabla_{\bu} \mathcal{V}(\bu)  = \|\bu-\bu_o\|_2 (\bu-\bu_o) -f = -F(\bu).$ Moreover, $\mathcal{V}(\bu)$ is a convex function of $\bu$, as a composition of convex functions.
\end{proof}

\begin{lemma}[Viscous-Plastic functional]
\label{lemma: VP functional}
    The solution of the semi-discrete viscous-plastic model \eqref{semi-discrete VP limit short} is the unique minimizer of the strongly convex functional 
    \begin{equation}
        \mathcal{J}(\bu) = \frac12 (\bu,\bu)+\int_\Omega \mathcal{V}(\bu) \,dx + \int_\Omega \Phi(\beps)\,dx,
    \end{equation}with $(.,.)$ the usual scalar product in $L^2(\Omega)$.
\end{lemma}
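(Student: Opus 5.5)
The plan is to show that the Euler--Lagrange equation of $\mathcal{J}$ is exactly the simplified VP system \eqref{semi-discrete VP limit short}. Uniqueness will then follow from strong convexity, and existence from the standing assumption that a VP solution exists (or, alternatively, from coercivity plus weak lower semicontinuity on $(H^1_0(\Omega))^2$).

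\textbf{Step 1: strong convexity of $\mathcal{J}$ on $(H^1_0(\Omega))^2$.} Split $\mathcal{J}$ into three terms:
\begin{itemize}
\item The quadratic term $\frac12(\bu,\bu)$ is $1$-strongly convex in $L^2$.
\item The term $\int_\Omega \mathcal{V}(\bu)\,dx$ is convex, since $\mathcal{V}$ is pointwise convex by Lemma~\ref{lemma:Forcing potential} and integration preserves convexity.
\item The term $\int_\Omega \Phi(\beps(\bu))\,dx$ is convex, because $\bu\mapsto\beps(\bu)$ is linear and $\Phi$ is convex by Lemma~\ref{lemma: convexity of phi}.
\end{itemize}
Hence for all $\bu,\bv$ and $\theta\in[0,1]$,
\[
\mathcal{J}(\theta\bu+(1-\theta)\bv)\le \theta\mathcal{J}(\bu)+(1-\theta)\mathcal{J}(\bv)-\tfrac{\theta(1-\theta)}{2}\|\bu-\bv\|_{L^2}^2 ,
\]
which is strong convexity with respect to the $L^2$ norm. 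In particular, $\mathcal{J}$ has at most one minimizer.

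\textbf{Step 2: first variation.} We compute the Gâteaux derivative in a direction $\bv\in(H^1_0(\Omega))^2$ using Lemma~\ref{lemma:VP potential} and Lemma~\ref{lemma:Forcing potential}:
\[
D\mathcal{J}(\bu)\cdot\bv=(\bu,\bv)-(F(\bu),\bv)+\int_\Omega \sig^{VP}(\beps(\bu)):\beps(\bv)\,dx .
\]
Differentiation under the integral is justified because $\nabla_{\beps}\Phi$ is bounded: $|\sig^{VP}|\lesssim P$, since $\zeta|\beps|\lesssim P$. In addition, $\nabla\mathcal{V}$ grows quadratically, which is controlled for $\bu\in H^1\subset L^4$ in two dimensions. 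Since $\sig^{VP}$ is symmetric, $\sig^{VP}:\beps(\bv)=\sig^{VP}:\nabla\bv$.

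Now let $(\bu^*,\sig^*)$ solve \eqref{semi-discrete VP limit short}. Testing the momentum equation with $\bv$ and integrating by parts (no boundary term, since $\bv\in H^1_0$ and $\sig^*\in H^{\operatorname{div}}$) gives $D\mathcal{J}(\bu^*)\cdot\bv=0$ for all $\bv$. Convexity gives $\mathcal{J}(\bv)\ge\mathcal{J}(\bu^*)+D\mathcal{J}(\bu^*)\cdot(\bv-\bu^*)$, so $\bu^*$ is a minimizer, and by Step 1 it is the unique one. Conversely, any minimizer satisfies $D\mathcal{J}(\bu)=0$, and defining $\sig=\sig^{VP}(\beps(\bu))$ recovers the weak form of the system.

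\textbf{Main obstacle.} The delicate point is functional-analytic rather than algebraic. The potential $\Phi$ grows only linearly in $\beps$, so $\mathcal{J}$ is not coercive in $H^1$. The direct method therefore naturally yields minimizers only in $BD$-type spaces, not in $H^1_0$. This is why I would lean on the assumption stated at the start of the section: existence of an $(H^1_0)^2\times H^{\operatorname{div}}$ VP solution is assumed. With that assumption, the argument only needs the minimality and uniqueness parts above. Along the way I would verify carefully the integrability justifying differentiation under the integral, particularly the cubic term $\int_\Omega\mathcal{V}$, which requires $\bu\in L^3$. This holds by the Sobolev embedding in two dimensions.
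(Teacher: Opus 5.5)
Your proof is correct and follows essentially the same route as the paper. Strong convexity comes from adding the $1$-strongly convex quadratic term to the convex terms $\int_\Omega \mathcal{V}$ and $\int_\Omega \Phi(\beps)$, and the first variation, integrated by parts under the homogeneous Dirichlet condition, identifies the critical point with the VP solution. Your version is slightly more careful than the paper's in two places: you work in weak form, integrating by parts against $\sig^*\in H^{\operatorname{div}}$ instead of writing $\operatorname{div}\sig^{VP}(\beps(\bu))$ in $L^2$ for arbitrary $\bu$, and you make sufficiency explicit through the convex gradient inequality, while relying, as the paper does, on the standing existence assumption.
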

\begin{proof}
    First, as the sum of a strongly convex and convex functions, the functional $\mathcal{J}$ is also strongly convex. Computing the differential of $\mathcal{J}$ we get 
    $$D\mathcal J(\bu)\cdot h =(\bu,h)+(\nabla_{\bu}\mathcal{V}(\bu),h)+
\bigl(\nabla_{\beps}\Phi(\beps),\nabla^s h\bigr) = (\bu,h)+(-F(\bu),h)+
\bigl(\sig^{VP}(\beps),\nabla^s h\bigr).
    $$Since $\sig^{\mathrm{VP}}(\beps)$ is a symmetric tensor, we obtain the equality
$$
\bigl(\nabla_{\beps}\Phi(\beps),\nabla^s h\bigr)
=
\bigl(\sig^{\mathrm{VP}}(\beps),\nabla h\bigr)
=
\bigl(-\operatorname{div}(\sig^{\mathrm{VP}}(\beps)),h\bigr) + \bigl(\sig^{\mathrm{VP}}(\beps), h\bigr)_{|\partial \Omega}.
$$Considering the homogeneous Dirichlet boundary condition on $h$, we can write
$$D\mathcal J(\bu)\cdot h = \bigl(\bu-\operatorname{div}(\sig^{\mathrm{VP}}(\beps))-F(\bu),h\bigr),
$$and $\nabla \mathcal J(\bu)=\bu-\operatorname{div}(\sig^{\mathrm{VP}}(\beps))-F(\bu),$ with minimum attained in $\bu^*$:$$
\nabla \mathcal J(\bu)=0
\quad\Longleftrightarrow\quad
\bu=\bu^*.$$\end{proof}


\begin{lemma}[mEVP second order system]
\label{lemma: ODE EVP}
The solution $\bu$ of the mEVP system \eqref{continuous-mEVP short} satisfies the second order differential equation
\begin{equation}
    \partial_{\tau \tau}\bu+2\partial_\tau\bu+\nabla\mathcal J( \bu)- D_{\bu}F(\bu)\cdot \partial_\tau \bu=0.
    \label{2nd oder ode mEVP}
\end{equation}
\end{lemma}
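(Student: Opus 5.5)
The plan is to eliminate $\sig$ by differentiating the momentum equation of \eqref{continuous-mEVP short} once more in pseudo-time and then substituting the stress equation. Assuming enough regularity to differentiate in $\tau$ and to commute $\partial_\tau$ with $\operatorname{div}$, differentiating the first equation gives
\begin{equation*}
\partial_{\tau\tau}\bu+\partial_\tau\bu-\operatorname{div}(\partial_\tau\sig)=D_{\bu}F(\bu)\cdot\partial_\tau\bu .
\end{equation*}
The second equation of \eqref{continuous-mEVP short} gives $\partial_\tau\sig=\sig^{VP}(\beps)-\sig$. Inserting this, we obtain
\begin{equation*}
\partial_{\tau\tau}\bu+\partial_\tau\bu-\operatorname{div}(\sig^{VP}(\beps))+\operatorname{div}(\sig)-D_{\bu}F(\bu)\cdot\partial_\tau\bu=0 .
\end{equation*}

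The remaining $\operatorname{div}(\sig)$ is removed using the undifferentiated momentum equation, which reads $\operatorname{div}(\sig)=\partial_\tau\bu+\bu-F(\bu)$. Substituting yields
\begin{equation*}
\partial_{\tau\tau}\bu+2\partial_\tau\bu+\bigl(\bu-\operatorname{div}(\sig^{VP}(\beps))-F(\bu)\bigr)-D_{\bu}F(\bu)\cdot\partial_\tau\bu=0 .
\end{equation*}
By the proof of Lemma~\ref{lemma: VP functional}, the bracket is exactly $\nabla\mathcal J(\bu)$. Here we use $\nabla_{\bu}\mathcal V=-F$ from Lemma~\ref{lemma:Forcing potential} and $\nabla_{\beps}\Phi=\sig^{VP}$ from Lemma~\ref{lemma:VP potential}, together with the homogeneous Dirichlet condition on $\bu$. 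This gives \eqref{2nd oder ode mEVP}.

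The main point needing care is justification rather than algebra. First, the time derivative of $F(\bu(\tau))$ is computed by the chain rule, which requires $F$ to be differentiable. Under the forcing form assumed in Theorem~\ref{thm:VP cv}, the nonlinear drag is evaluated at the frozen velocity $\bu^n$, so $F$ is affine or even constant in $\bu$. Then $D_{\bu}F(\bu)\cdot\partial_\tau\bu$ is trivially well defined, and the term is kept only for generality. Second, we need $\partial_\tau\bu$ to be differentiable in $\tau$ with values in $L^2$ or a dual space. We will state the identity in the weak (distributional in $\Omega$) sense, testing against $h\in(H^1_0(\Omega))^2$. In that sense $\operatorname{div}$ commutes with $\partial_\tau$ because both are linear, and regularity of $\sig$ in $\tau$ follows from the linear ODE $\partial_\tau\sig+\sig=\sig^{VP}(\beps)$ once $\beps(\bu(\tau))$ is continuous in $\tau$.
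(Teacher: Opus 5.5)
Your derivation matches the paper's proof step for step. You differentiate the momentum equation in $\tau$, replace $\partial_\tau\sig$ by $\sig^{VP}(\beps)-\sig$, eliminate $\operatorname{div}(\sig)$ with the undifferentiated momentum equation, and identify $\bu-\operatorname{div}(\sig^{VP}(\beps))-F(\bu)$ with $\nabla\mathcal J(\bu)$ via Lemma~\ref{lemma: VP functional}. The algebra is correct.

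Your justification paragraph needs one correction: the ocean drag in Theorem~\ref{thm:VP cv} is not frozen.
\begin{itemize}
\item In the paper's notation, $\bu^n$ is the unknown at $t^n$; the previous step enters as $\bu^{n-1}$ (cf.\ the Remark after the theorem).
\item Lemma~\ref{lemma:Forcing potential}, which you invoke yourself, derives $F(\bu)=\|\bu_o-\bu\|_2(\bu_o-\bu)+f$ from the cubic potential $\mathcal V$. So $F$ is neither affine nor constant, and $D_{\bu}F(\bu)=-\nabla^2_{\bu}\mathcal V(\bu)\neq 0$.
\item Consequently, the term $-D_{\bu}F(\bu)\cdot\partial_\tau\bu$ is not kept ``only for generality''. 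It is exactly the extra dissipation used in Lemma~\ref{lemma: lyapunov fun}, and it is controlled through Lemma~\ref{lemma: control hessian} in Theorem~\ref{lemma: modified Lyapunov}.
\end{itemize}

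The chain-rule step still holds, but for a different reason. The map $\bv\mapsto\|\bv\|_2\bv$ is $C^1$ on $\mathbb R^2$, with derivative $\|\bv\|_2 I+\bv\otimes\bv/\|\bv\|_2$ (extended by $0$ at $\bv=0$), which grows only linearly. Hence $\tau\mapsto F(\bu(\tau))$ is differentiable in $L^2(\Omega)$ whenever $\bu$ is $C^1$ in $\tau$ with values in $H^1_0(\Omega)\subset L^4(\Omega)$ (two dimensions). With this, your weak-in-space statement of the identity goes through.
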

\begin{proof}
Considering the mEVP system
\begin{align}
\partial_\tau \bu+ \bu -
\operatorname{div}(\sig)&=F(\bu), \label{eq u 1}\\
\partial_\tau \sig+\sig-\sig^{{VP}}(\beps)&=0,
\label{eq sig 1}
\end{align}
we derive \eqref{eq u 1} with respect to $\tau$:
\begin{equation}\label{second order 1}
\partial_{\tau \tau}\bu+\partial_\tau \bu-
\operatorname{div}(\partial_\tau \sig) - D_{\bu}F(\bu)\cdot \partial_\tau \bu =0.
\end{equation}
From \eqref{eq sig 1}, $\partial_\tau \sig=\sig^{VP}(\beps)-\sig,$ then $\operatorname{div}(\partial_\tau \sig)
=\operatorname{div}(\sig^{VP}(\beps))-
\operatorname{div}(\sig).$ 
Using again \eqref{eq u 1}, we get
$$
\operatorname{div}(\partial_\tau\sig)=\operatorname{div}(\sig^{VP}(\beps))-\partial_\tau \bu-\bu+F(\bu),
$$
which, injected into~\eqref{second order 1} gives
$$\partial_{\tau \tau}\bu+2 \partial_\tau \bu + \bu-
\operatorname{div}(\sig^{VP}(\beps)) - F(\bu) - D_{\bu}F(\bu)\cdot \partial_\tau \bu =0.$$From Lemma \ref{lemma: VP functional}, we obtain the expected result.
\end{proof}
On the discrete level, this correspondence to a second order system has been used by Kimmritz~\cite{kimmritz2015convergence} for the stability analysis of the EVP iteration. 

\begin{lemma}[mEVP Lyapunov function]The function
    \begin{equation}
        \mathcal E(\tau)=\frac12 \|\partial_\tau \bu \|^2+\mathcal J(\bu)-\mathcal J(\bu^*)
    \end{equation}is a Lyapunov function for the mEVP second order PDE \eqref{2nd oder ode mEVP} and the equilibrium point $(\partial_\tau \bu = 0, \bu = \bu^*)$, with $\bu^*$ the solution of the VP system \eqref{semi-discrete VP limit short}.
    \label{lemma: lyapunov fun}
\end{lemma}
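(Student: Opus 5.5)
The plan is to verify the two defining properties of a Lyapunov function directly: $\mathcal E$ is nonnegative and vanishes exactly at the equilibrium, and $\mathcal E$ is nonincreasing along trajectories of \eqref{2nd oder ode mEVP}.

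\textbf{Positivity.} By Lemma~\ref{lemma: VP functional}, $\bu^*$ is the unique minimizer of $\mathcal J$, so $\mathcal J(\bu)-\mathcal J(\bu^*)\ge 0$. I will quantify this with the strong convexity of $\mathcal J$. The term $\frac12(\bu,\bu)$ is $1$-strongly convex, and the remaining terms $\int_\Omega\mathcal V$ and $\int_\Omega\Phi$ are convex by Lemmas~\ref{lemma:Forcing potential} and~\ref{lemma: convexity of phi}. Combined with $\nabla\mathcal J(\bu^*)=0$, this gives
\[
\mathcal J(\bu)-\mathcal J(\bu^*)\ \ge\ \tfrac12\|\bu-\bu^*\|^2 .
\]
Hence
\[
\mathcal E(\tau)\ \ge\ \tfrac12\|\partial_\tau\bu\|^2+\tfrac12\|\bu-\bu^*\|^2\ \ge 0,
\]
with equality if and only if $\partial_\tau\bu=0$ and $\bu=\bu^*$. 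This lower bound will also be reused for the exponential rate in Theorem~\ref{thm:VP cv}.

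\textbf{Decay.} I differentiate $\mathcal E$ in $\tau$ using the chain rule $\frac{d}{d\tau}\mathcal J(\bu)=(\nabla\mathcal J(\bu),\partial_\tau\bu)$, with the gradient identified in Lemma~\ref{lemma: VP functional}. This gives
\[
\frac{d\mathcal E}{d\tau}=(\partial_\tau\bu,\partial_{\tau\tau}\bu)+(\nabla\mathcal J(\bu),\partial_\tau\bu).
\]
Substituting $\partial_{\tau\tau}\bu$ from \eqref{2nd oder ode mEVP}, the $\nabla\mathcal J$ terms cancel, leaving
\[
\frac{d\mathcal E}{d\tau}=-2\|\partial_\tau\bu\|^2+\bigl(D_{\bu}F(\bu)\,\partial_\tau\bu,\partial_\tau\bu\bigr).
\]
\begin{itemize}
\item For forcings of the form in Theorem~\ref{thm:VP cv}, $F$ does not depend on the current iterate $\bu$ (it involves only $\bu^n$ and $\tilde f$), so $D_{\bu}F=0$. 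Then $\frac{d\mathcal E}{d\tau}=-2\|\partial_\tau\bu\|^2\le 0$.
\item More generally, if $F=-\nabla\mathcal V$ with $\mathcal V$ convex as in Lemma~\ref{lemma:Forcing potential}, then $D_{\bu}F=-D^2\mathcal V$ is negative semidefinite. The extra term is then $\le 0$ and the same conclusion holds.
\end{itemize}
In either case $\mathcal E$ is nonincreasing, and it is strictly decreasing whenever $\partial_\tau\bu\neq 0$.

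\textbf{Main obstacle.} The delicate step is justifying the chain rule for $\tau\mapsto\mathcal J(\bu(\tau))$, since $\mathcal J$ contains $\int_\Omega\Phi(\beps(\bu))$. I would handle it as follows.
\begin{itemize}
\item Because $\Delta_{\min}>0$, $\Delta$ is smooth, and $\nabla_{\beps}\Phi=\sig^{VP}$ is bounded by a constant times $P$ and Lipschitz with constant of order $P/\Delta_{\min}$.
\item Consequently $\bu\mapsto\int_\Omega\Phi(\beps(\bu))$ is $C^1$ on $(H^1_0(\Omega))^2$.
\item The chain rule then holds provided $\bu\in C^1\bigl([0,\infty);(H^1_0(\Omega))^2\bigr)$, which I would take from the assumed well-posedness of \eqref{continuous-mEVP short}.
\item The pairing $(\nabla\mathcal J,\partial_\tau\bu)$ is first understood as an $H^{-1}$--$H^1_0$ duality, and becomes an $L^2$ product once \eqref{eq u 1} and the $H^{\operatorname{div}}$ regularity of $\sig$ are used.
\end{itemize}
The remaining computations are routine.
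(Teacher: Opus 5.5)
Your proposal is correct and follows essentially the same route as the paper: differentiate $\mathcal E$ (equivalently, test \eqref{2nd oder ode mEVP} with $\partial_\tau\bu$), let the $\nabla\mathcal J$ terms cancel, and conclude from $-D_{\bu}F=\nabla^2_{\bu}\mathcal V\succeq 0$. Your strong-convexity bound $\mathcal J(\bu)-\mathcal J(\bu^*)\ge\frac12\|\bu-\bu^*\|^2$ makes explicit the positivity the paper calls immediate, and it correctly records that $\mathcal E=0$ also requires $\partial_\tau\bu=0$.

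One caveat: your first bullet misreads the notation. In Theorem~\ref{thm:VP cv}, $\bu^n$ denotes the current unknown, because the ocean drag is kept implicit; this is why the paper introduces $\mathcal V$ and its Hessian. So $D_{\bu}F\neq 0$ in the intended setting, and it is your second bullet that carries the argument.
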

\begin{proof}
First, it is immediate to check that $\mathcal{E}\ge 0$ is null for $\bu \equiv \bu^*$ only. 
Testing \eqref{2nd oder ode mEVP} by $\partial_\tau \bu$ we get
\begin{multline*}
(\partial_{\tau \tau} \bu, \partial_\tau \bu ) + 2 \|\partial_\tau \bu\|^2 + (\nabla \mathcal{J}(\bu), \partial_\tau \bu) - (D_{\bu}F(\bu)\cdot \partial_\tau \bu, \partial_\tau \bu) \\
    = \frac{1}{2} \frac{d}{d\tau} \|\partial_\tau \bu\|^2 + 2 \|\partial_\tau \bu\|^2 + \frac{d}{d\tau} \mathcal{J}(\bu) + (\partial_\tau \bu, \nabla^2_{\bu} \mathcal{V}(\bu)\, \partial_\tau \bu) = 0,
\end{multline*}
meaning
    \begin{equation}
        \frac{d}{d\tau} \mathcal{E}(\tau) + 2 \|\partial_\tau
\bu\|^2 + (\partial_\tau \bu, \nabla^2_{\bu} \mathcal{V}(\bu)\, \partial_\tau \bu) = 0.    
\end{equation} The Hessian $\nabla^2_{\bu} \mathcal{V}(\bu)$ is a semi-definite positive matrix:
$$\nabla^2_{\bu} \mathcal{V}(\bu) = \frac{(\bu-\bu_o) \otimes (\bu-\bu_o)}{\|\bu-\bu_o\|_2} + \|\bu-\bu_o\|_2 I,
$$which can be extended by $0 \, I$ in $\bu=\bu_o$. The term $(\partial_\tau \bu, \nabla^2_{\bu} \mathcal{V}(\bu)\, \partial_\tau \bu)$ is thus positive, and $\mathcal{E}$ is a decreasing function of $\tau$: $\forall \tau > 0, \quad \mathcal E(t)\leq \mathcal E(0).$
\end{proof}
\paragraph{Remark} \textit{This result gives the stability of the mEVP solution $\bu$. However it does not imply its convergence. We will then consider a modified Lyapunov function for the PDE \eqref{2nd oder ode mEVP}.}

\begin{lemma}[Control of the Hessian matrix]
\label{lemma: control hessian}
There exists $M>0$ such that
$$
    \bigl(\nabla^2_{\bu}\mathcal V(\bu)(\bu-\bu^*),\bu-\bu^*\bigr)
    \leq
    M\bigl(\mathcal J(\bu)-\mathcal J(\bu^*)\bigr).
$$
\end{lemma}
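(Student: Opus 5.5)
We compare both sides of the inequality to the quantity $\|\bu-\bu^*\|^2$, using the explicit form of the Hessian of $\mathcal V$ and the strong convexity of $\mathcal J$ from Lemma~\ref{lemma: VP functional}.

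\emph{Lower bound on $\mathcal J$.} The functional $\mathcal J$ is $1$-strongly convex, since it is $\frac12(\bu,\bu)$ plus convex terms by Lemmas~\ref{lemma: convexity of phi} and \ref{lemma:Forcing potential}. Its minimizer is $\bu^*$, where $\nabla\mathcal J(\bu^*)=0$. The first-order characterization of strong convexity then gives
\[
\mathcal J(\bu)-\mathcal J(\bu^*)\;\ge\;\bigl(\nabla\mathcal J(\bu^*),\bu-\bu^*\bigr)+\tfrac12\|\bu-\bu^*\|^2=\tfrac12\|\bu-\bu^*\|^2 .
\]
Rigorously, I would apply strong convexity along the segment $\bu^*+\theta(\bu-\bu^*)$, divide by $\theta$, and let $\theta\to0$. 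This uses only the Gâteaux differentiability established in the proof of Lemma~\ref{lemma: VP functional}.

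\emph{Upper bound on the Hessian term.} From the formula in the proof of Lemma~\ref{lemma: lyapunov fun}, pointwise
\[
\nabla^2_{\bu}\mathcal V(\bu)\,\xi\cdot\xi=\frac{\bigl((\bu-\bu_o)\cdot\xi\bigr)^2}{\|\bu-\bu_o\|_2}+\|\bu-\bu_o\|_2|\xi|^2\le 2\|\bu-\bu_o\|_2\,|\xi|^2 .
\]
Hence
\[
\bigl(\nabla^2_{\bu}\mathcal V(\bu)(\bu-\bu^*),\bu-\bu^*\bigr)\le 2\int_\Omega\|\bu-\bu_o\|_2\,|\bu-\bu^*|^2\,dx\le 2\|\bu-\bu_o\|_{L^\infty}\|\bu-\bu^*\|^2 .
\]
Combining the two bounds gives the claim with $M=4\sup\|\bu-\bu_o\|_{L^\infty}$.

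\emph{Main obstacle.} The hard step is making this supremum finite along the trajectory. Two routes are available.

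\textbf{First route (uniform bound).} Obtain a uniform $L^\infty$ bound on $\bu$, with $\bu_o$ assumed bounded. By Lemma~\ref{lemma: lyapunov fun}, $\mathcal J(\bu(\tau))\le \mathcal E(0)+\mathcal J(\bu^*)$. Since $\mathcal J$ contains $\frac13\int\|\bu-\bu_o\|^3$, this bounds $\bu$ in $L^3$ uniformly in $\tau$, but not in $L^\infty$.

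\textbf{Second route (Hölder).} Avoid $L^\infty$ and estimate instead
\[
\int_\Omega\|\bu-\bu_o\|\,|\bu-\bu^*|^2\,dx\le \|\bu-\bu_o\|_{L^3}\,\|\bu-\bu^*\|_{L^3}^2 .
\]
Then bound $\|\bu-\bu^*\|_{L^3}^2$ by $\mathcal J(\bu)-\mathcal J(\bu^*)$, using the cubic growth of $\mathcal V$. Convexity of $\mathcal V$, together with the monotonicity inequality $\bigl(\|a\|a-\|b\|b\bigr)\cdot(a-b)\ge c|a-b|^3$, should give
\[
\mathcal J(\bu)-\mathcal J(\bu^*)\gtrsim \|\bu-\bu^*\|_{L^3}^3 .
\]
This only controls the cube, not the square, of $\|\bu-\bu^*\|_{L^3}$. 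When $\|\bu-\bu^*\|_{L^3}\ge1$, the cube dominates the square. When it is below $1$, the square term needs another argument, and I would combine the $L^2$ estimate with the uniform $L^3$ bound.

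I therefore expect the argument to rest on a boundedness assumption. Either the trajectory stays in a bounded set, which I would derive from the energy bound of Lemma~\ref{lemma: lyapunov fun}, or one works with the DG-discretized finite-dimensional setting, where all norms are equivalent. $M$ then depends on $\mathcal E(0)$, $\bu_o$ and $\bu^*$. I would try the $L^\infty$ route first, stating that $M$ depends on a bound for the trajectory.
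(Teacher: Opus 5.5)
Your proposal has a genuine gap: it never produces a constant independent of $\bu$, and uniformity in $\bu$ is the whole content of the lemma. Comparing both sides with the unweighted quantity $\|\bu-\bu^*\|^2$ leaves you with $M=4\sup_\tau\|\bu(\tau)-\bu_o\|_{L^\infty}$, and neither of your routes makes this finite.

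\begin{itemize}
\item The energy bound of Lemma~\ref{lemma: lyapunov fun} bounds $\bu$ uniformly only in $L^3$. It gives no control of $\beps$: $\Phi$ is not coercive, and along $\beps=sI$ with $s\to\infty$ it even tends to $0$.
\item Even an $H^1_0$ bound would not give $L^\infty$ in two dimensions.
\item Passing to the DG space changes the statement and makes $M$ depend on $h$ through inverse estimates.
\end{itemize}

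This matters because Theorem~\ref{lemma: modified Lyapunov} fixes $\eta<\min(1,1/M)$ a priori. Your H\"older route fails for the same structural reason. The cubic bound $\mathcal J(\bu)-\mathcal J(\bu^*)\gtrsim\|\bu-\bu^*\|_{L^3}^3$ has already discarded the weight that makes the two sides comparable.

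The missing idea is a pointwise comparison that keeps this weight. Set $\psi(\bv)=\frac13\|\bv\|_2^3$, $\bv=\bu-\bu_o$, $\bv^*=\bu^*-\bu_o$, and $h=\bv-\bv^*$. You already have
$$h^T\nabla^2\psi(\bv)h\le 2\|\bv\|_2\|h\|_2^2\le 2\bigl(\|\bv\|_2+\|\bv^*\|_2\bigr)\|h\|_2^2.$$
The matching lower bound is on the Bregman divergence $D_\psi(\bv,\bv^*)=\psi(\bv)-\psi(\bv^*)-\nabla\psi(\bv^*)\cdot h$. Taylor's formula with integral remainder, together with $\nabla^2\psi(w)\succeq\|w\|_2 I$ and the reverse triangle inequality, gives
$$D_\psi(\bv,\bv^*)\ge\|h\|_2^2\int_0^1(1-t)\bigl|(1-t)\|\bv^*\|_2-t\|\bv\|_2\bigr|\,dt\ge\tfrac1{12}\bigl(\|\bv\|_2+\|\bv^*\|_2\bigr)\|h\|_2^2.$$
The last step uses $\int_0^1(1-t)|x-t|\,dt\ge\frac1{12}$ for $x\in[0,1]$; this is Lemma~\ref{lemma: bound Bregman}. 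Hence the Hessian term is pointwise at most $24\,D_\psi(\bv,\bv^*)$.

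After integrating over $\Omega$:
\begin{itemize}
\item the linear part $-f\cdot\bu$ of $\mathcal V$ has zero Bregman divergence;
\item the quadratic part and the $\Phi$ part of $\mathcal J$ have nonnegative Bregman divergences by convexity.
\end{itemize}
Therefore $\int_\Omega D_\psi\le\mathcal J(\bu)-\mathcal J(\bu^*)-(\nabla\mathcal J(\bu^*),\bu-\bu^*)=\mathcal J(\bu)-\mathcal J(\bu^*)$. This gives $M=24$, uniformly in $\bu$, $\bu^*$ and $\bu_o$. It needs no boundedness assumption and does not use the strong convexity of $\mathcal J$.

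Your cubic monotonicity estimate is just the weakened form $D_\psi\gtrsim\|h\|_2^3$ of this bound, obtained from $\|\bv\|_2+\|\bv^*\|_2\ge\|h\|_2$. Keeping the full weight $\|\bv\|_2+\|\bv^*\|_2$ is exactly what closes the argument.
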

\begin{proof} Let us define $\psi(\bv)=\frac13 \|\bv\|_2^3$. The forcing potential writes $\mathcal{V}(\bu) = \psi(\bu-\bu_o)- f\cdot \bu$, and $\nabla^2\psi(\bu-\bu_o) = \nabla^2_\bu\mathcal{V}(\bu)$.
From Lemma \ref{lemma: bound Bregman}, we have for $\bv,\, \bv^* \in \mathbb{R}^2$
$$
\psi(\bv)-\psi(\bv^*)-\nabla\psi(\bv^*)\cdot(\bv-\bv^*)
    \geq
    \frac1{12}
    \left (\|\bv\|_2+\|\bv^*\|_2\right )\|\bv -\bv^*\|_2^2.
$$
On the other hand, we have for $h \in \mathbb{R}^2$
\[
\begin{aligned}
    h^T\nabla^2\psi(\bv)h
    &=
    \|\bv\|_2 \|h\|_2^2+\frac{(\bv \cdot h)^2}{\|\bv\|_2}  \\
    &\leq
    2 \|\bv\|_2 \|h\|_2^2 \leq
    2(\|\bv\|_2 + \|\bv^*\|_2)\, \|h\|_2^2.
\end{aligned}
\]
Combining the two estimates and choosing $h = \bv-\bv^*$, we obtain
\[\begin{aligned}
    (\bv-\bv^*)^T\nabla^2\psi(\bv)(\bv-\bv^*) &\leq 24 \left [\psi(\bv)-\psi(\bv^*)-\nabla\psi(\bv^*)\cdot(\bv-\bv^*)\right ] \\
    &\leq M \left [\psi(\bv)-\psi(\bv^*)-\nabla\psi(\bv^*)\cdot(\bv-\bv^*)\right ].
    \end{aligned}
\]
We now apply this inequality in $\bv=\bu-\bu_o$ and $\bv^*=\bu^*-\bu_o$, and integrate over the domain $\Omega$. We get
\[\begin{aligned}
    \bigl(\nabla^2_\bu\mathcal{V}(\bu)(\bu-\bu^*),\bu-\bu^*\bigr)
    &\leq
    M
    \int_\Omega \left [\psi(\bu-\bu_o)-\psi(\bu^*-\bu_o)-\nabla\psi(\bu^*-\bu_o)\cdot(\bu-\bu^*)\right ]\\
    &= M
    \int_\Omega \left [\mathcal{V}(\bu)-\mathcal{V}(\bu^*)-\nabla_{\bu}\mathcal{V}(\bu^*)\cdot(\bu-\bu^*)\right ],
\end{aligned}
\]since $\mathcal{V}(\bu) = \psi(\bu-\bu_o)- f\cdot \bu$ and $\nabla_{\bu}\mathcal{V}(\bu) =\nabla\psi(\bu-\bu_o)-f $.

\noindent As all terms in \(\mathcal J\) are convex, we have 
$$ \mathcal{J}(\bu)-\mathcal{J}(\bu^*)-\nabla \mathcal{J}(\bu^*)\cdot(\bu-\bu^*) \ge 
\mathcal{V}(\bu)-\mathcal{V}(\bu^*)-\nabla_{\bu}\mathcal{V}(\bu^*)\cdot(\bu-\bu^*).$$ And finally, since $\bu^*$ is the unique minimizer of the functional $\mathcal{J}$, $\nabla \mathcal{J}(\bu^*)=0$ and we have
$$ M \left(\mathcal{J}(\bu)-\mathcal{J}(\bu^*)\right) \ge 
\mathcal{V}(\bu)-\mathcal{V}(\bu^*)-\nabla_{\bu}\mathcal{V}(\bu^*)\cdot(\bu-\bu^*)\ge \bigl(\nabla^2_\bu\mathcal{V}(\bu)(\bu-\bu^*),\bu-\bu^*\bigr).$$
\end{proof}

\begin{theorem}[Modified Lyapunov functional]
\label{lemma: modified Lyapunov}
    For $0< \eta <\min(1, \frac{1}{M}) $, the function 
    \begin{equation}
        \mathcal{E}_\eta(\tau) = \mathcal E(\tau)+\eta(\bu-\bu^*,\partial_\tau\bu)+\eta\|\bu-\bu^*\|^2,
    \end{equation}equivalent to $\mathcal{E}(\tau)$, is a strict Lyapunov function for the mEVP second order system \eqref{2nd oder ode mEVP} and the equilibrium point $(\partial_\tau \bu = 0, \bu = \bu^*)$. Moreover, the following inequalities hold
    \begin{equation}
        \mathcal{E}_\eta(\tau) \leq \mathcal{E}_\eta(0)e^{-\frac{\eta}{6}\tau}, \qquad \mathcal{E}(\tau)
\leq 6\, \mathcal{E}(0)e^{-\frac{\eta}{6}\tau}.
    \end{equation}
\end{theorem}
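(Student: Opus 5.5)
We differentiate $\mathcal E_\eta$ along trajectories of \eqref{2nd oder ode mEVP}, write $\bv := \partial_\tau \bu$ and $\bw := \bu-\bu^*$, and aim for a differential inequality $\frac{d}{d\tau}\mathcal E_\eta \le -\frac{\eta}{6}\mathcal E_\eta$. Gronwall then gives the first estimate. The second estimate follows from a two-sided equivalence between $\mathcal E_\eta$ and $\mathcal E$.

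\textbf{Equivalence.} By strong convexity of $\mathcal J$ (Lemma~\ref{lemma: VP functional}, which contains the term $\frac12(\bu,\bu)$) and $\nabla\mathcal J(\bu^*)=0$, we have $\mathcal J(\bu)-\mathcal J(\bu^*)\ge \frac12\|\bw\|^2$. Young's inequality gives $|\eta(\bw,\bv)|\le \frac{\eta}{2}\|\bv\|^2+\frac{\eta}{2}\|\bw\|^2$. Hence
\[
\mathcal E_\eta \ge \tfrac{1-\eta}{2}\|\bv\|^2+\mathcal J(\bu)-\mathcal J(\bu^*),
\]
and, using $\eta<1$,
\[
\mathcal E_\eta \le \mathcal E+\tfrac{\eta}{2}\|\bv\|^2+\tfrac{3\eta}{2}\|\bw\|^2 \le C\,\mathcal E .
\]
Tracking constants should give $c\,\mathcal E\le \mathcal E_\eta\le C\,\mathcal E$ with $C/c\le 6$, which yields the stated factor $6$. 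If the crude bound does not reach $6$, I would restrict $\eta$ further.

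\textbf{Derivative.} From the proof of Lemma~\ref{lemma: lyapunov fun},
\[
\tfrac{d}{d\tau}\mathcal E=-2\|\bv\|^2-(\bv,\nabla^2\mathcal V\,\bv).
\]
Next, $\frac{d}{d\tau}(\bw,\bv)=\|\bv\|^2+(\bw,\partial_{\tau\tau}\bu)$, and by \eqref{2nd oder ode mEVP}, $\partial_{\tau\tau}\bu=-2\bv-\nabla\mathcal J(\bu)-\nabla^2\mathcal V\,\bv$, using $D_\bu F=-\nabla^2\mathcal V$. Finally $\frac{d}{d\tau}\|\bw\|^2=2(\bw,\bv)$. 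The $-2\eta(\bw,\bv)$ from the damping term cancels the $+2\eta(\bw,\bv)$ coming from $\frac{d}{d\tau}\|\bw\|^2$, which is the reason for the coefficient $\eta$ in front of $\|\bw\|^2$. Altogether
\[
\tfrac{d}{d\tau}\mathcal E_\eta=-(2-\eta)\|\bv\|^2-(\bv,\nabla^2\mathcal V\,\bv)-\eta(\bw,\nabla\mathcal J(\bu))-\eta(\bw,\nabla^2\mathcal V\,\bv).
\]
Convexity gives $(\bw,\nabla\mathcal J(\bu))\ge \mathcal J(\bu)-\mathcal J(\bu^*)+\frac12\|\bw\|^2$.

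\textbf{Main obstacle: the cross term.} The hard step is $\eta(\bw,\nabla^2\mathcal V\,\bv)$, since $\nabla^2\mathcal V$ is unbounded. Because $\nabla^2\mathcal V$ is positive semidefinite, Cauchy–Schwarz in the $\nabla^2\mathcal V$ inner product and Young give
\[
\eta\,|(\bw,\nabla^2\mathcal V\,\bv)|\le (\bv,\nabla^2\mathcal V\,\bv)+\tfrac{\eta^2}{4}(\bw,\nabla^2\mathcal V\,\bw).
\]
The first piece is absorbed by the dissipation term. For the second, Lemma~\ref{lemma: control hessian} gives the bound $\frac{\eta^2 M}{4}\bigl(\mathcal J(\bu)-\mathcal J(\bu^*)\bigr)$, and $\eta<1/M$ makes this at most $\frac{\eta}{4}\bigl(\mathcal J(\bu)-\mathcal J(\bu^*)\bigr)$, which is absorbed by the $-\eta\bigl(\mathcal J(\bu)-\mathcal J(\bu^*)\bigr)$ term. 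This is exactly where the hypothesis $\eta<\min(1,1/M)$ enters. We are left with
\[
\tfrac{d}{d\tau}\mathcal E_\eta\le -\|\bv\|^2-\tfrac{3\eta}{4}\bigl(\mathcal J(\bu)-\mathcal J(\bu^*)\bigr)-\tfrac{\eta}{2}\|\bw\|^2 .
\]
Comparing with the upper bound $\mathcal E_\eta\le \frac{1+\eta}{2}\|\bv\|^2+\bigl(\mathcal J(\bu)-\mathcal J(\bu^*)\bigr)+\frac{3\eta}{2}\|\bw\|^2$, each coefficient dominates $\frac{\eta}{6}$ times the corresponding one: $1\ge\frac{\eta(1+\eta)}{12}$, $\frac{3\eta}{4}\ge\frac{\eta}{6}$, and $\frac{\eta}{2}\ge\frac{\eta}{6}\cdot\frac{3\eta}{2}$. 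Hence $\frac{d}{d\tau}\mathcal E_\eta\le-\frac{\eta}{6}\mathcal E_\eta$. Strictness of the Lyapunov function follows because the derivative vanishes only when $\bv=0$ and $\bw=0$.
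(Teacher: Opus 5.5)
\textbf{Paper's route and yours.} Your derivative identity follows the paper's route, and so do the remaining steps: Cauchy--Schwarz in the $\nabla^2_{\bu}\mathcal V$ semi-inner product (the paper writes this through a square root $R(\bu)$), Lemma~\ref{lemma: control hessian} with $\eta M<1$, convexity, and Gronwall. These steps are correct.

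Your bookkeeping is actually tighter than the paper's. You use strong convexity in $(\nabla\mathcal J(\bu),\bu-\bu^*)\ge \mathcal J(\bu)-\mathcal J(\bu^*)+\frac12\|\bu-\bu^*\|^2$. You then compare coefficients directly with your upper bound for $\mathcal E_\eta$. This yields the rate $\eta/6$ without going through $\mathcal E_\eta\le 3\mathcal E$, as the paper does. So strictness and $\mathcal E_\eta(\tau)\le\mathcal E_\eta(0)e^{-\eta\tau/6}$ are fully established.

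\textbf{The gap: the factor $6$.} The gap is in the second inequality. Your explicit constants are $c=1-\eta$ and $C=1+3\eta$. The ratio $(1+3\eta)/(1-\eta)$ exceeds $6$ as soon as $\eta>5/9$, and it is unbounded as $\eta\to1$. ``Restricting $\eta$ further'' would prove a weaker statement than the one claimed for all $0<\eta<\min(1,1/M)$.

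You also cannot simply import the paper's bounds $\frac12\mathcal E\le\mathcal E_\eta\le 3\mathcal E$. The paper's upper constant $3$ rests on $\mathcal J(\bu)-\mathcal J(\bu^*)\ge\|\bu-\bu^*\|^2$. Unit strong convexity only gives your $\frac12\|\bu-\bu^*\|^2$, and with that the same Young split yields only $\mathcal E_\eta\le(1+3\eta)\mathcal E\le 4\mathcal E$.

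\textbf{How to close it.} Write $\mathbf{w}=\bu-\bu^*$, $\bv=\partial_\tau\bu$ and $D=\mathcal J(\bu)-\mathcal J(\bu^*)$. Use the weighted Young inequality $|(\mathbf w,\bv)|\le\frac{\epsilon}{2}\|\bv\|^2+\frac{1}{2\epsilon}\|\mathbf w\|^2$ together with $\|\mathbf w\|^2\le 2D$. Take $\epsilon=\sqrt2+1$ for the upper bound, where the net coefficient of $\|\mathbf w\|^2$ is $\frac{\sqrt2+1}{2}\eta$. Take $\epsilon=\sqrt2-1$ for the lower bound, where it is $-\frac{\sqrt2-1}{2}\eta$. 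This gives
\[
\bigl(1-(\sqrt2-1)\eta\bigr)\,\mathcal E\le\mathcal E_\eta\le\bigl(1+(\sqrt2+1)\eta\bigr)\,\mathcal E .
\]
Hence, for every $\eta\in(0,1)$,
\[
\frac{C}{c}<\frac{2+\sqrt2}{2-\sqrt2}=3+2\sqrt2<6 .
\]
Then $\mathcal E(\tau)\le c^{-1}\mathcal E_\eta(0)e^{-\eta\tau/6}\le 6\,\mathcal E(0)e^{-\eta\tau/6}$ over the full stated range of $\eta$.
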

\paragraph{Remark} \textit{In practice, $\frac{\eta}{6}$ is of order $10^{-2}$. }
\begin{proof}\textbf{Equivalence with $\mathcal{E}(\tau)$: }first, we have the lower bound $$
\mathcal{E}_\eta(\tau) \ge \mathcal E(\tau)-\frac{1}{2}\eta \left [\|\bu-\bu^*\|^2 + \|\partial_\tau\bu\|^2 \right ]+\eta\|\bu-\bu^*\|^2 \ge \frac{1}{2 }\mathcal{E}(\tau) + \frac{\eta}{2}\|\bu-\bu^*\|^2\ge \frac{1}{2 }\mathcal{E}(\tau),
$$implying that $\mathcal{E}_\eta(\tau) $ is also strictly positive for $\bu \ne \bu^*$, and that $\mathcal{E}_\eta(\bu^*,\tau)=0$. Second, using Cauchy-Schwarz and Young inequalities, we can write
$$
\mathcal E(\tau)+\eta(\bu-\bu^*,\partial_\tau\bu)+\eta\|\bu-\bu^*\|^2 \leq \mathcal E(\tau) + \frac{3}{2}\eta \|\bu - \bu^*\|^2 + \frac{\eta}{2} \|\partial_\tau \bu\|^2.
$$The strong convexity of $\mathcal{J}$, whose minimum is attained in $\bu^*$ gives $\mathcal{J}(\bu) - \mathcal{J}(\bu^*) \ge \|\bu - \bu^*\|^2$, thus, the following inequality holds
$$\mathcal E_\eta(\tau) \leq \mathcal E(\tau) + \frac{3}{2}\eta \left (\mathcal{J}(\bu)-\mathcal{J}(\bu^*)  \right ) + \frac{\eta}{2} \|\partial_\tau \bu\|^2 \leq 3 \, \mathcal E(\tau),
$$and completes the equivalence relationship.

\noindent \textbf{Lyapunov property: } from Lemma \ref{lemma: lyapunov fun}, we have
$$
\frac{d}{d\tau}\mathcal{E}_\eta(\tau)  = -2 \|\partial_\tau \bu\|^2 + \eta (\partial_\tau \bu, \partial_\tau \bu) + \eta (\partial_{\tau\tau}\bu, \bu-\bu^*) + 2 \eta (\partial_{\tau}\bu, \bu-\bu^*).
$$Using the second order system \eqref{2nd oder ode mEVP} to replace $\partial_{\tau\tau}\bu$, we get
\begin{equation}
\label{inter dEeta}
    \begin{aligned}
    \frac{d}{d\tau}\mathcal{E}_\eta(\tau)  = &  -(2-\eta) \|\partial_\tau \bu\|^2  - \eta (\nabla \mathcal{J}(\bu), \bu-\bu^*) \\
    &\qquad -(\partial_\tau \bu, \nabla^2_{\bu} \mathcal{V}(\bu)\, \partial_\tau \bu)- \eta (\nabla^2_{\bu} \mathcal{V}(\bu)\,  \partial_\tau \bu , \bu-\bu^*).
\end{aligned}
\end{equation}
Since $\nabla^2_{\bu} \mathcal{V}(\bu)\succeq 0 $ and as it is symmetric, there exists a symmetric ``square root'' matrix $R(\bu)$ such that $R(\bu)^T R(\bu) = \nabla^2_{\bu} \mathcal{V}(\bu)$. Applying Cauchy-Schwarz inequality followed by Young inequality, we have 
$$\begin{aligned}
    -\eta (\nabla^2_{\bu} \mathcal{V}(\bu)\,  \partial_\tau \bu , \bu-\bu^*) &\leq \eta \|R(\bu)\partial_\tau \bu\| \|R(\bu)(\bu-\bu^*) \|\\
    &\leq \frac{1 }{2 }\|R(\bu)\partial_\tau \bu\|^2 +  \frac{\eta^2 }{2}\|\nabla^2_{\bu} \mathcal{V}(\bu)\|\|\bu-\bu^*\|^2.
\end{aligned}$$ Inserting this inequality in \eqref{inter dEeta} and using Lemma \ref{lemma: control hessian}, we obtain
\begin{equation*}
    \begin{aligned}
    \frac{d}{d\tau}\mathcal{E}_\eta(\tau)  \leq &  -(2-\eta) \|\partial_\tau \bu\|^2  - \eta (\nabla \mathcal{J}(\bu), \bu-\bu^*) \\
    &\quad -\frac12\|R(\bu)\partial_\tau \bu\|^2 +  \frac{\eta^2}{2} M (\mathcal{J}(\bu)-\mathcal{J}(\bu^*)).
\end{aligned}
\end{equation*}
From the convexity of $\mathcal{J}$, the inequality 
$$(\nabla \mathcal{J}(\bu), \bu^*-\bu)\leq\mathcal{J}(\bu^*)-\mathcal{J}(\bu) \Leftrightarrow -(\nabla \mathcal{J}(\bu), \bu-\bu^*)\leq - \left [\mathcal{J}(\bu)-\mathcal{J}(\bu^*)\right]
$$holds. Then, we can obtain the following bound
$$
\begin{aligned}
    \frac{d}{d\tau}\mathcal{E}_\eta(\tau)  &\leq -(2-\eta) \|\partial_\tau \bu\|^2 - \frac{\eta}{2} (\mathcal{J}(\bu) - \mathcal{J}(\bu^*) ) -\frac{1 }{2 }\|R(\bu)\partial_\tau \bu\|^2 - \frac{\eta}{2} \left[ 1-\eta M  \right]\|\bu-\bu^*\|^2\\
    & \leq -\frac{\eta}{2} \mathcal{E}(\tau) \leq -\frac{\eta}{6} \mathcal{E}_\eta(\tau),
\end{aligned}$$ 
proving that $\mathcal{E}_\eta(\tau)$ is a strict Lyapunov function for \eqref{2nd oder ode mEVP}. 
From Gronwall's lemma, we further obtain 
$$
\mathcal{E}_\eta(\tau)
\leq \mathcal{E}_\eta(0)e^{-\frac{\eta}{6}\tau}.
$$ and
$$
\mathcal{E}(\tau)
\leq 2 \mathcal{E}_\eta(0)e^{-\frac{\eta}{6}\tau}.
    $$
\end{proof}

\begin{proof}[\textbf{Proof of Theorem \ref{thm:VP cv}}]

\noindent  \textbf{$L^2$-convergence for $\bu$.} From Lemma \ref{lemma: modified Lyapunov}, the trajectories of \eqref{2nd oder ode mEVP} satisfy
$$\frac{1}{4} \|\partial_\tau\bu\|^2 + \frac{\eta}{2}  \|\bu-\bu^*\|^2 \leq \mathcal{E}_\eta (0)e^{-\frac{\eta}{6} \tau},
$$which directly gives the $L^2$-convergence of $\bu$ towards $\bu^*$.\\

\noindent \textbf{$\sig^{VP}(\beps)$-convergence.} The VP potential $\Phi(\beps)$ is a convex function whose gradient $\nabla_{\beps}\Phi(\beps) = \sig^{VP}(\beps)$ is Lipschitz:
$$\|\sig^{VP}(\beps)-\sig^{VP}(\beps^*)\| \leq \frac{\|P\|_\infty}{\Delta_{\min}} \| \text{Tr} (\beps-\beps^*) I + 2 e^{-2}(\beps-\beps^*)'\| \leq \frac{6 \|P\|_\infty}{\Delta_{\min}}  \|\beps-\beps^*\|.
$$Thus, denoting $L$ the Lipschitz constant, the following classical inequality holds
\begin{equation}
    \Phi(\beps^*) - \Phi(\beps) \leq \sig^{VP}(\beps^*):(\beps^*-\beps)-\frac{1}{2L}\|\sig^{VP}(\beps)-\sig^{VP}(\beps^*)\|_2^2.
    \label{inter maj sigVP}
\end{equation}Equivalently we have
\begin{equation}
    \frac{1}{2L}\|\sig^{VP}(\beps)-\sig^{VP}(\beps^*)\|_2^2 \leq \Phi(\beps) - \Phi(\beps^*) - \sig^{VP}(\beps^*):(\beps-\beps^*),
    \label{inter maj sigVP}
\end{equation}
which can be used to control $\|\sig^{VP}(\beps)-\sig^{VP}(\beps^*)\|_2^2$.
Noticing that, for all test function $h \in (H^1_0(\Omega))^2$ the variational formulation of \eqref{semi-discrete VP limit short} give $(\bu^*-f, h) = (-\sig^*, \nabla^s h) $, we have
$$
\begin{aligned}
    \mathcal{J}(\bu)-\mathcal{J}(\bu^*) &=  \frac12 (\bu,\bu)-\frac12 (\bu^*,\bu^*)-(f,\bu-\bu^*)+ \int_\Omega \Phi(\beps)-\Phi(\beps^*)\\
    &= \frac12 \|\bu-\bu^*\|^2 + (\bu^*-f, \bu-\bu^*)+ \int_\Omega \Phi(\beps)-\Phi(\beps^*)\\
    &= \frac12 \|\bu-\bu^*\|^2 - (\sig^*, \beps-\beps^*)+ \int_\Omega \Phi(\beps)-\Phi(\beps^*)\\
    &\ge \int_\Omega \Phi(\beps)-\Phi(\beps^*)- (\sig^*, \beps-\beps^*) ,
\end{aligned}
$$which can be inserted in \eqref{inter maj sigVP} after integration over space to obtain
\begin{equation*}
    \frac{1}{2L}\|\sig^{VP}(\beps)-\sig^{VP}(\beps^*)\|^2 \leq \mathcal{J}(\bu) - \mathcal{J}(\bu^*)\leq \mathcal{E}_\eta (0)e^{-\frac{\eta}{6}  \tau},
\end{equation*}proving the $L^2$-convergence of $\sig^{VP}(\beps)$ towards $\sig^{VP}(\beps^*) = \sig^*$.\\

\noindent \textbf{$\sig$-convergence.} We now prove the $H^{\operatorname{div}}(\Omega)$ convergence of $\sig$. First, substracting $\sig$ and $\sig^*$ equations, and applying the resulting equation on the test function $\sig-\sig^*$, we get
\begin{multline*}
\frac12 \frac{d}{d\tau} \|\sig-\sig^*\|^2 + \|\sig-\sig^*\|^2 = (\sig^{VP}(\beps)-\sig^{VP}(\beps^*), \sig-\sig^*)
\\
\leq \frac{1}{2}\|\sig^{VP}(\beps)-\sig^{VP}(\beps^*)\|^2+\frac{1}{2}\|\sig-\sig^*\|^2,
\end{multline*}
which, from previous results, gives the $L^2$-convergence of both $\sig$ and its temporal derivative with an exponential rate:
$$
\frac12 \frac{d}{d\tau} \|\sig-\sig^*\|^2 + \frac{1}{2}\|\sig-\sig^*\|^2 \leq 2L\mathcal{E}_\eta (0)e^{-\frac{\eta}{6}  \tau}.
$$ Note that $L$ can still be very big, as it is scaled by the viscosity $1/\Delta_{\min} \approx 10^9$.
The convergence of $\text{div}(\sig)$ is immediately deduced from the convergence of the viscosity $\bu$ : $\|\text{div}(\sig)-\text{div}(\sig^*)\| \leq \|\partial_\tau \bu\| + \|\bu-\bu^*\|\leq C e^{-\frac{\eta}{6}  \tau } $, with $C > 0$. Finally, we have 
$$\underset{\tau \rightarrow\infty}{\lim }\|\sig-\sig^*\|^2_{H^{\operatorname{div}}(\Omega)} =0,
$$which ends the proof.
\end{proof}

\paragraph{Remark} \textit{The convergence of both $\bu$ and $\sig$ does not necessarily imply the convergence of the strain tensor $\beps$. Indeed, due to the lack of coercivity, and with not additional hypothesis, the control of the VP potential, or of the VP law, is not sufficient to bound $\beps$.}

\textit{In the hypothesis of a lower bound for the viscosity $\frac{1}{\Delta(\beps)}$, a bound on $\|\beps\|$ can be established \cite{Mehlmann2021}, leading to the $L^2$-weak convergence of $\beps$. Then, reusing some inequalities from the proof, $L^2$-strong convergence of $\beps$ can be achieved.}


\section{A mixed form DG discretization for the sea ice model}
This section presents the discontinuous Galerkin sea ice dynamics model, based on a Local Discontinuous Galerkin (LDG) scheme for the mEVP equation. 
\subsection{LDG scheme for the inner mEVP system}
We describe here the discretization of the mEVP inner system using a Local Discontinuous Galerkin scheme \cite{di2011mathematical, cockburn1998local}. We consider the simplified pseudo-time mEVP mometum equation written as a first-order system on the domain $\Omega$
\begin{align}
 \partial_\tau \mathbf{u} +\bu - \nabla \cdot \boldsymbol{\sigma} &= F(\mathbf{u}), \label{eq u}\\
\partial_\tau \boldsymbol{\sigma} + \boldsymbol{\sigma}
&= \boldsymbol{\sigma}^{VP}(\beps), \label{eq sig} \\
\beps- \frac{1}{2} (\nabla \mathbf{u} + \nabla \mathbf{u} ^T) &=0 \label{eq eps}.
\end{align}
Let $\mathcal{T}_h = \bigcup_K K$ be a triangular or quadrilateral mesh of the domain $\Omega$ of mesh size $h$, 
$V^n(\mathcal{T}_h) = \left \{ \bu \in L^2(\mathcal{T}_h)^2\, | \, \bu_{|K}  \in (\mathcal{O}^n(K))^2 \right \}$ --- with $\mathcal{O}^n(K):=\mathcal{P}^n(K)$ for triangular mesh and $\mathcal{O}^n(K):=\mathcal{Q}^n(K)$ for quadrilateral mesh --- the DG space of velocity vectors of order $n>0$, and $W^m(\mathcal{T}_h) = \left \{ \sig \in L^2(\mathcal{T}_h)^{2\times 2} \, | \, \sig^T = \sig,\,  \sig_{|K}  \in (\mathcal{O}^m(K))^{2\times 2} \right \}$ the DG space of symmetric matrices of order $m\ge0$ on the discretization $\mathcal{T}_h$. We consider the following trial functions 
\[
\bu_h \in V^n(\mathcal{T}_h), \qquad
\beps_h \in W^m(\mathcal{T}_h), \qquad
\sig_h \in W^m(\mathcal{T}_h),
\] 
and test functions
\[
\bv_h \in V^n(\mathcal{T}_h), \qquad
\btau_h \in W^m(\mathcal{T}_h), \qquad
\bt_h \in W^m(\mathcal{T}_h),
\]
for the three equations \eqref{eq u}, \eqref{eq sig} and \eqref{eq eps} respectively. 
Multiplying these by the appropriate test function and integrating over a mesh element $K$, we obtain, $\forall \bv_h \in V^n(\mathcal{T}_h) $ and $\btau_h \in W^m(\mathcal{T}_h)$,
\begin{align}
 ( \partial_\tau \mathbf{u}_h, \mathbf{v}_h)_K + ( \mathbf{u}_h, \mathbf{v}_h)_K+ (\boldsymbol{\sigma}_h, \nabla \mathbf{v}_h)_K-
\langle \widehat{\boldsymbol{\sigma}}_h  \mathbf{n}, \mathbf{v}_h \rangle_{\partial K}& = ( F(\mathbf{u}_h), \mathbf{v}_h)_K, \\
 (\partial_\tau \boldsymbol{\sigma}_h, \bt_h)_K
+
(\boldsymbol{\sigma}_h, \bt_h)_K
&= (\boldsymbol{\sigma}^{VP}(\mathbf{\epsilon}_h), \bt_h)_K,  \\
   (\mathbf{\beps}_h, \boldsymbol{\tau}_h)_K
+(\mathbf{u}_h, \nabla \cdot \boldsymbol{\tau}_h)_K-
\langle \widehat{\mathbf{u}}_h, \boldsymbol{\tau}_h \cdot \mathbf{n} \rangle_{\partial K}&=0,
\end{align}
where $(.,.)$ denotes the usual scalar product in $(L^2(\Omega))^2$ or $(L^2(\Omega))^{2\times2}$, the $\hat{.}$ symbol indicates numerical fluxes on the element boundary, and $n$ is the outward normal to the boundary. Summing over the elements of the mesh, the semi-discrete first order system writes 
\begin{align}
 \sum_{K} ( \partial_\tau \mathbf{u}_h, \mathbf{v}_h)_K+ \sum_{K} (\mathbf{u}_h, \mathbf{v}_h)_K
+ \sum_{K} (\boldsymbol{\sigma}_h, \nabla \mathbf{v}_h)_K
\qquad \nonumber
\\
-\sum_{F \in \mathcal{F}_h^i} \langle \widehat{\boldsymbol{\sigma}}_h \mathbf{n}, \ljp \mathbf{v}_h  \rjp\rangle_{F} -\sum_{F \in \mathcal{F}_h^b} \langle \widehat{\boldsymbol{\sigma}}_h \mathbf{n}, \mathbf{v}_h \rangle_{F}
&= \sum_{K} (F(\bu_h), \mathbf{v}_h)_K, \label{u line}
\\
 \sum_K (\partial_\tau \boldsymbol{\sigma}_h, \bt_h)_K
+
\sum_K (\boldsymbol{\sigma}_h, \bt_h)_K
&= \sum_K (\boldsymbol{\sigma}^{VP}(\mathbf{\epsilon}_h), \bt_h)_K, \label{sig line}\\
 \sum_{K} (\beps_h, \boldsymbol{\tau}_h)_K
+ \sum_{K} (\bu_h, \nabla \cdot \boldsymbol{\tau}_h)_K \qquad\qquad\nonumber\\
- \sum_{F \in \mathcal{F}_h^i} \langle \widehat{\mathbf{u}}_h, \ljp \boldsymbol{\tau}_h \mathbf{n} \rjp \rangle_{F} - \sum_{F \in \mathcal{F}_h^b} \langle \widehat{\mathbf{u}}_h,  \boldsymbol{\tau}_h \mathbf{n} \rangle_{F}&=0, \label{eps line}
\end{align}
with $F\in \mathcal{F}_h^i$ the interior faces of the mesh and $F\in \mathcal{F}_h^b$ its boundary faces. 
We choose LDG \cite{cockburn1998local, di2011mathematical} numerical fluxes such that on interior faces
\begin{align}
\widehat{\mathbf{u}} &= \{\!\{\bu \}\!\} + a \ljp \bu\rjp, \\
\widehat{\sig} n
&=
\{\!\{\sig n\}\!\} - a \ljp \sig n \rjp
- b [\![\bu]\!],
\end{align}where $\{\!\{\cdot\}\!\}$ and $[\![\cdot]\!]$ denote averages and jumps across faces, and flux parameters $(a, b)$ are such that $(a, b) \in [0, 0.5)\times (0, +\infty)$. On the boundary, we impose a classical no slip condition on the velocity and, considering a neighboring ghost value, we impose a transparent no jump condition on $\sig n$, compatible with the interior fluxes. The boundary fluxes write
\begin{align}
\widehat{\mathbf{u}} &= 0 \, (= \{\!\{\bu \}\!\} + a \ljp \bu\rjp), \\
\widehat{\sig} n
&= \sig n - \frac{b}{0.5-a} \bu \, (= 
\{\!\{\sig n\}\!\} - a \ljp \sig n \rjp
- b [\![\bu]\!]).
\end{align}
Now rewriting the $\beps$ line to avoid the divergence of $\btau$ term, and using the above LDG fluxes, the complete semi-discretized system writes
\begingroup
\small
\begin{subequations}
    \label{semi discrete mEVP}
\begin{align}
 \sum_{K} ( \partial_\tau \mathbf{u}_h, \mathbf{v}_h)_K+ \sum_{K} (\mathbf{u}_h, \mathbf{v}_h)_K
+ \sum_{K} (\boldsymbol{\sigma}_h, \nabla \mathbf{v}_h)_K\qquad &\nonumber \\
-\sum_{F \in \mathcal{F}_h^i} \langle \{\!\{\sig n\}\!\} - a \ljp \sig n \rjp
- b [\![\bu]\!]), \ljp \mathbf{v}_h  \rjp\rangle_{F} 
%
-\sum_{F \in \mathcal{F}_h^b} \langle \boldsymbol{\sigma}_h \mathbf{n} - \frac{b}{0.5-a} \bu , \mathbf{v}_h \rangle_{F}
&= \sum_{K} (F(\bu_h), \mathbf{v}_h)_K,
\label{u line sd}\\
 \sum_K (\partial_\tau \boldsymbol{\sigma}_h, \bt_h)_K
+
\sum_K (\boldsymbol{\sigma}_h, \bt_h)_K
&= \sum_K (\boldsymbol{\sigma}^{VP}(\mathbf{\epsilon}_h), \bt_h)_K, \label{sig line sd}\\
\sum_{K} (\beps_h, \boldsymbol{\tau}_h)_K - \sum_{K} (\nabla \mathbf{u}_h, \boldsymbol{\tau}_h)_K  
\qquad&\nonumber
\\
- \sum_{F \in \mathcal{F}_h^i}  \left[ \langle \{\!\{\bu \}\!\} + a \ljp \bu\rjp), \ljp \boldsymbol{\tau}_h \mathbf{n} \rjp \rangle_F - \ljp \langle \mathbf{u}_h, \boldsymbol{\tau}_h \mathbf{n} \rangle_F \rjp \right] + \sum_{F \in \mathcal{F}_h^b}   \langle \mathbf{u}_h, \boldsymbol{\tau}_h \mathbf{n} \rangle_F& =0. \label{eps line sd}
\end{align}
\end{subequations}
\endgroup

\paragraph{Stability of the semi-discretized system for specific linear rheologies}
Proposition \ref{prop:stability} provides an $L^2$-energy estimate for the velocity and stress variables of the semi-discrete mEVP system \eqref{semi discrete mEVP}, under the assumption of a specific linear constitutive relation between strain and stress tensors: $\sig^{VP}(\beps) = L\beps$ with $L$ a symmetric definite positive matrix. It shows in this simplified setting that the LDG spatial discretization does not introduce additional instabilities into the mEVP system, owing to the compensation of the chosen numerical fluxes.

\paragraph{Convergence and stabilization}
LDG convergence orders have been studied for linear elliptic equations in \cite{castillo2000priori} and depend, in particular, on the mesh-size scaling of the stabilization parameter associated with the velocity jump $b$. To recover optimal convergence for the primal variable, $b$ should be scaled as $1/h$ \cite{castillo2000priori}, where $h$ is the mesh size. In our case, this stabilization plays a second role, the auxiliary variable being the strain tensor rather than the full gradient of the primal variable. Indeed, the broken strain norm alone does not control the broken gradient because of elementwise rigid-body modes, and a discrete Korn inequality therefore requires additional jump control \cite{hansbo2003discontinuous}. The velocity-jump stabilization, with $b \propto \frac{1}{h}$, provides this control. A similar stabilization is discussed in more detail in \cite{Mehlmann2021} for Crouzeix-Raviart elements discretization of the mEVP model.

Compatibility of the velocity $V^n(\mathcal{T}_h)$ and stress and strain $W^m(\mathcal{T}_h)$ spaces must also be ensured (see \cite{richter2023dynamical}). In particular, the following inclusion must be satisfied
$$\frac12 \left(\nabla \bv_{h} + \nabla \bv_{h} ^T \right) \in W^m(\mathcal{T}_h)
$$ for all $\bv_h \in V^n(\mathcal{T}_h)$. In practice, we will use the $\left (V^1(\mathcal{T}_h), W^0(\mathcal{T}_h)\right )$ and $\left (V^2(\mathcal{T}_h), W^1(\mathcal{T}_h) \right )$ couples of spaces for triangular meshes, and the couples  $\left (V^1(\mathcal{T}_h), W^1(\mathcal{T}_h)   \right )$ and $\left (V^2(\mathcal{T}_h), W^2(\mathcal{T}_h) \right )$ for quadrilateral meshes. \label{sec:compatibility}


\subsection{Fully discontinuous sea-ice dynamics}

\paragraph{DG-IMEX mEVP discrete system}
In the complete sea-ice momentum equation, both physical and pseudo time discretizations come into play. We denote the physical time steps by $t^n=t^0 + n \Delta t$ and the associated time discrete solution $(\bu^n, \sig^n)$. Pseudo time steps are denoted as $\tau^k = k \, \Delta \tau$ and the associated solutions $(\bu^k, \sig^k)$. We apply an implicit-explicit (IMEX) first order Euler temporal discretization in pseudo time, upon an implicit Euler scheme in physical time for the mEVP equation. The time-discretized system writes
\begin{equation}
\begin{aligned}
  \frac{1}{\Delta \tau}\,\rho_\text{ice}H(\bu^{k+1} - \bu^k)
  + \rho_{\mathrm{ice}} H\,(\bu^{k+1} - \bu^n)
  - \Delta t\, \nabla \cdot \sig^{k+1} &= \Delta t\, F(\bu^k),\\
  \frac{1}{\Delta \tau}\,(\sig^{k+1} - \sig^k)
  + \sig^{k+1}
  &= \sig^{VP}(\beps^k).
  \end{aligned}
  \label{eq:mevp_momentum_full}
\end{equation}
The mixed form DG spatial discretization is then applied, to form the fully discrete system:
\begingroup
\small
\begin{subequations}
\begin{align}
\sum_{K} ( \rho_{\mathrm{ice}} H\,(\frac{1}{\Delta \tau}+1)\, \mathbf{u}_h^{k+1},\mathbf{v}_h)_K+ \Delta t \sum_{K}& (\boldsymbol{\sigma}_h^{k+1}, \nabla \mathbf{v}_h)_K -\Delta t\sum_{F \in \mathcal{F}_h^i} \langle \widehat{\boldsymbol{\sigma}}_h^{k+1} \mathbf{n}, \ljp \mathbf{v}_h  \rjp\rangle_{F}\nonumber \\
-\Delta t\sum_{F \in \mathcal{F}_h^b} \langle \widehat{\boldsymbol{\sigma}}_h^{k+1} \mathbf{n}, \mathbf{v}_h \rangle_{F}
= \Delta t \sum_{K} &(F(\bu_h^{k}), \mathbf{v}_h)_K + \sum_{K} (\rho_{\mathrm{ice}} H\,( \frac{1}{\Delta \tau} \mathbf{u}_h^{k} + \bu_h^n), \mathbf{v}_h)_K, \label{u line}
\\
\sum_K ((\frac{1}{\Delta\tau}+1) \,\boldsymbol{\sigma}_h^{k+1}, \bt_h)_K
&= \sum_K (\boldsymbol{\sigma}^{VP}(\mathbf{\epsilon}_h^{k}), \bt_h)_K + \sum_K (\frac{1}{\Delta\tau} \,\boldsymbol{\sigma}_h^{k}, \bt_h)_K,\label{sig line}\\
\sum_{K} (\beps_h^{k+1}, \boldsymbol{\tau}_h)_K
+ \sum_{K} (\bu_h^{k+1}, \nabla \cdot \boldsymbol{\tau}_h)_K 
&- \sum_{F \in \mathcal{F}_h^i} \langle \widehat{\mathbf{u}}_h^{k+1}, \ljp \boldsymbol{\tau}_h \mathbf{n} \rjp \rangle_{F} - \sum_{F \in \mathcal{F}_h^b} \langle \widehat{\mathbf{u}}_h^{k+1},  \boldsymbol{\tau}_h \mathbf{n} \rangle_{F}=0, \label{eps line}
\end{align}
\label{discrete mEVP system}
\end{subequations}
\endgroup%
using the formerly introduced LDG fluxes.
Note that the mixed-form DG discretization is applied implicitly, while the forcings and the VP rheology are applied explicitly in pseudo-time $\tau$, analogously to standard mEVP solvers \cite{richter2023dynamical}.
A numerical analysis of LDG-IMEX schemes for convection diffusion equations in the case of Runge-Kutta time discretization can be found in \cite{wang2016local}.

Due to the potentially very large viscosity values $\eta, \zeta \propto \frac{1}{\Delta (\beps)} \in [0,\, 5\cdot  10^{8}]$ arising in the viscous-plastic constitutive law $\sig^{VP}(\beps)$, the LDG jump penalty parameter $b$ must be scaled with the maximal viscosity in order to ensure numerical stability. By contrast, the alternating parameter $a$ was found to have only a negligible influence on the numerical solution. In practice, we set $a = 0.4$ and $b = \frac{10^9}{h}$, where $h$ denotes the characteristic mesh size. The numerical fluxes are then given by
\begin{subequations}
    \begin{align}
\widehat{\mathbf{u}} &= \{\!\{\bu \}\!\} + 0.4 \, \ljp \bu\rjp, \\
\widehat{\sig} n
&=
\{\!\{\sig n\}\!\} - 0.4 \ljp \sig n \rjp
- \frac{10^9}{h} [\![\bu]\!].
\end{align}
\label{seq:numerical fluxes}
\end{subequations}

\paragraph{Remark: solution of the discrete system} 
\textit{Each sub-iteration step of the fully discrete LDG formulation of the momentum equation~\eqref{discrete mEVP system} takes the form
\begin{equation}\label{fd:solve:1}
    \mathbb{A} \begin{pmatrix}
        \bu_h^{k+1} \\
        \sig_h^{k+1}\\
        \beps_h^{k+1}
    \end{pmatrix} = \mathbb{F},\text{ where}\quad
    \mathbb{A} = \begin{pmatrix}
    M_{\bu} + S_{\bu} & G_{\bu} & 0 \\
    0& M_{\sig} & 0\\
    G_{\beps} & 0 & M_{\beps}
    \end{pmatrix},\quad
    \mathbb{F} = \begin{pmatrix}
        F_\bu \\ F_{\sig} \\ F_{\beps}
    \end{pmatrix}.
\end{equation}
By $M_\bu$, $M_{\sig}$ and $M_{\beps}$ we denote mass matrices in the corresponding discrete spaces. They are block-diagonal and local on every element of the mesh. $G_\bu$ and $G_{\beps}$ are discretizations of the gradient and, in particular, they contain fluxes and thus introduce couplings to their directly adjacent elements. $S_\bu$ is the $\bu$-dependence of the $\hat\sig$-flux that also coupled neighboring elements -- see~\eqref{seq:numerical fluxes}. These matrices are multiplied with coefficients that depend on the tracers $A$ and $H$ as well as the time step, the mEVP parameters and several material constants. These coefficients are not relevant for the numerical solution, \eqref{eq:mevp_momentum} gives details.}

\textit{In this work, which was not focused on efficiency, \eqref{fd:solve:1} was inverted as a global system. However, the computational cost of this inversion can be alleviated. Equation~\eqref{fd:solve:1} decouples into three substeps than can be solved subsequently 
\begin{equation}\label{fd:solve:3}
\begin{aligned}
(a)&&M_{\sig}\sig_h^{k+1} &= F_{\sig}\\
(b)&&(M_\bu+S_\bu) \bu_h^{k+1} &= F_\bu - G_\bu \sig_h^{k+1}\\
(c)&&M_{\beps} \beps_h^{k+1} &= F_{\beps} - G_{\beps} \bu_h^{k+1}.
\end{aligned}
\end{equation}
The three sub-steps can be solved efficiently each. $(a)$ and $(c)$ only involve the inversion of the mass matrix without couplings between elements and can be run fully in parallel. Using orthogonal basis functions, even the inversion of the small local element mass matrices could be avoided. Step $(b)$ includes the jump terms~\eqref{seq:numerical fluxes} that behave like a discretization of the Laplacian. No coupling between the two velocity components is introduced such that the problem corresponds to two scalar heat-equation like systems which can be efficiently inverted using a preconditioned CG method or, in linear complexity, using a multigrid iteration~\cite{MehlmannRichter2017mg}.}

\paragraph{DG - Runge Kutta scheme for the tracers $A$ and $H$}
The transport equations for the tracers $A$ and $H$
\begin{align}
    \partial_t A + \operatorname{div}(\bu A) &= 0, & A &\in [0,1],\\
    \partial_t H + \operatorname{div}(\bu H) &= 0, & H &\ge 0,
\end{align}
are computed following \cite{richter2023dynamical} with a 0-order DG upwind space scheme combined with an order 2 Runge-Kutta time scheme. A null-flux boundary condition is applied, following the homogeneous Dirichlet boundary condition for the velocity $\bu$.

We denote by $S^p(\mathcal{T}_h) = \left \{ s \in L^2(\mathcal{T}_h)\, | \, s_{|K}  \in \mathcal{P}^n(K) \right \}$ the $p$-th order discontinuous Galerkin space of tracers on the mesh $\mathcal{T}_h$. For $s_h \in S^p(\mathcal{T}_h)$ and a continuous velocity $\bu$ the upwind DG formulation for A (or H) reads
\begin{equation}
    \begin{aligned}
        \sum_K \left [ \left ( \partial_t A_h, s_h \right )_K -  \left( A_h \bu, s_h \right )_K \right ]  + \sum_{F\in \mathcal{F}_h^i} \left [ \left < \ljp A_h \rjp , \bu \cdot n \, \ljp s_h \rjp  \right > + \frac12 \left < |\bu \cdot n| \, \ljp A_h \rjp , \ljp s_h \rjp  \right > \right ]=0.
    \end{aligned}
\end{equation}
In our case, the discrete velocity $\bu_h$ also belongs to a discontinuous space, so the terms $\bu \cdot n$ are replaced by the LDG flux terms $\hat{\bu}_h \cdot n $, and the upwind discretization reads
\begin{equation}
    \begin{aligned}
        \sum_K \left [ \left ( \partial_t A_h, s_h \right )_K -  \left( A_h \bu_h, s_h \right )_K \right ]  + \sum_{F\in \mathcal{F}_h^i} \left [ \left < \ljp A_h \rjp , \hat{\bu}_h \cdot n \, \ljp s_h \rjp  \right > + \frac12 \left < |\hat{\bu}_h \cdot n| \, \ljp A_h \rjp , \ljp s_h \rjp  \right > \right ]=0.
    \end{aligned}
    \label{upwind A}
\end{equation} Now denoting $\mathbb{C}_{\bu_h}$ and $\mathbb{M}_S $ the advection and mass matrices over $S^p(\mathcal{T}_h)$ such that the scheme \eqref{upwind A} writes $\mathbb{M}_S \, \partial_t A_h + \mathbb{C}_{\bu} A_h = 0$, the fully discrete DG-RK2 scheme for the tracers write
\begin{align}
    &A_h^{n+1}=A_h^n+\Delta t\,\mathbb{M}_S^{-1} \mathbb{C}_{\bu_h^{n+1}}\left(A_h^n+\frac{\Delta t}{2}\mathbb{M}_S^{-1} \mathbb{C}_{\bu_h^{n+1}} A_h^n\right),\\
    &H_h^{n+1}=H_h^n+\Delta t\,\mathbb{M}_S^{-1} \mathbb{C}_{\bu_h^{n+1}}\left(H_h^n+\frac{\Delta t}{2}\mathbb{M}_S^{-1} \mathbb{C}_{\bu_h^{n+1}} H_h^n\right),
\end{align}
with the index $n$ still referring to the physical time stepping. After advecting the tracers, limiters are applied to satisfy $H\ge 0$ and $A \in [0,1]$ conditions.

\paragraph{Remark} \textit{In this work, we use 0-order discontinuous Galerkin spaces for the tracers. The order for the tracers space can of course be increased, as done in \cite{richter2023dynamical}, but was not the focus of this study. Moreover authors of \cite{richter2023dynamical} have shown that the choice of velocity and stress discretization played a more important role in resolving linear kinematic features for the sea ice dynamics than that of the tracers.}

The current global DG sea ice dynamics computation is summarized in Algorithm \ref{alg:ldg_mevp_short}.
\begin{algorithm}[h]
\caption{DG sea-ice dynamics}
\label{alg:ldg_mevp_short}
\begin{algorithmic}[1]
\State Initialize $\bu^0,\beps^0,\sig^0,A^0,H^0$

\For{$n=0,\dots,N_t-1$}
    \State Update forcings
    \State Assemble once + precondition the LDG--mEVP operator $\mathbb{A}(A^n,H^n)$

    \For{$k=0,\dots,N_{\mathrm{mEVP}}-1$}
        \State Recompute forcings $F(\bu_h^{k},A_h^n,H_h^n)$ and
        $\sig^{\mathrm{VP}}(\beps_h^{k},A_h^n,H_h^n)$
        \State Solve the linear mEVP system for
        $(\bu_h^{k+1},\beps_h^{k+1},\sig_h^{k+1})$
    \EndFor

    \State Set
    $(\bu^{n+1},\beps^{n+1},\sig^{n+1})
    =(\bu^{N_{mEVP}},\beps^{N_{mEVP}},\sig^{N_{mEVP}})$

    \State Advect $A_h^n$ and $H_h^n$ with DG upwind RK2 using $\bu_h^{n+1}$
    \State Apply limiters to $A_h^{n+1}$ and $H_h^{n+1}$
\EndFor
\end{algorithmic}
\end{algorithm}


\section{Assessment on the sea ice benchmark}
\subsection{The sea ice benchmark}
The benchmark introduced in \cite{Mehlmann2017, MehlmannLKF2021} consists in solving the sea ice dynamics equations on a $L \times L =  512 \text{ km} \times 512 \text{ km}$ square with homogeneous Dirichlet (no slip) conditions for the sea-ice velocity, using prescribed wind --- moving anticyclone --- and ocean --- steady circular current --- forcings, and initial conditions for the sea-ice velocity, height and concentration. The simulation is ran for times $t \in [0, T]$, $T=2$ days. The prescribed ocean current is given by the field 
\begin{equation}
\bu_o(x,y)=\frac{\bu_o^\text{max}}{L}
\begin{pmatrix}
2y-L \\
-2x+L
\end{pmatrix},
\label{eq:ocean_velocity}
\end{equation}with $\bu_o^\text{max} = 0.01$ m.s$^{-1}$. The moving anticyclone field is 
\begin{equation}
\bu_a(x,y,t)=
-\frac{30}{100 \text{ km}} \exp\left(
-\frac{|\mathbf{x} -\mathbf{m}(t)|}{100 \text{ km}}
\right)
\begin{pmatrix}
\cos (\theta) & \sin (\theta)\\
-\sin (\theta) & \cos (\theta)
\end{pmatrix} \left ( \mathbf{x}-\mathbf{m}(t)\right ),
\label{eq:wind_velocity}
\end{equation}
with $\mathbf{m}(x,y) = 256\,\unit{km} + 51.2\,\unit{km.day^{-1}}$ the center of the anticyclone,  $\theta=72^\circ$ and the maximal wind speed $\bu_a^\text{max} = \frac{30}{e} \approx 11\,\unit{m.s^{-1}}$. Initial conditions for the ice velocity and tracers are given by
\begin{align}
    \bu(t = 0, x, y) = \bu_0(x, y) &= 0\,\unit{m.s^{-1}},\\
     A(0, x, y) = A_0(x,y)& = 1,\\
     H(0, x, y) = H_0(x,y)& = 0.3\,\unit{m}+ 0.005\,\unit{m}\left ( \sin\left(\frac{60\, x}{1000\,\unit{km}}\right) + \sin \left (\frac{30\, y}{1000\,\unit{km}}\right ) \right ).
\end{align}
The parameters for the mEVP model are given in Table \ref{tab:mevp_parameters}. \begin{table}[h]
\centering
\caption{Physical and numerical parameters for the mEVP model, taken from \cite{MehlmannLKF2021}.}
\label{tab:mevp_parameters}
\begin{tabular}{lll}
\toprule
Parameter & Value & Description \\
\midrule
$\rho_{ice}$ & $\SI{900}{kg.m^{-3}}$ & Ice density \\
$\rho_a$ & $\SI{1.3}{kg.m^{-3}}$ & Air density \\
$\rho_o$ & $\SI{1026}{kg.m^{-3}}$ & Ocean density \\
$C_a$ & $1.2\times 10^{-3}$ & Air drag coefficient \\
$C_o$ & $5.5\times 10^{-3}$ & Ocean drag coefficient \\
$f_c$ & $\SI{1.46e-4}{s^{-1}}$ & Coriolis parameter \\
$P^\star$ & $\SI{27.5e3}{N.m^{-2}}$ & Ice strength parameter \\
$C$ & $20$ & Ice concentration parameter \\
$e$ & $2$ & Elliptic yield-curve eccentricity \\
$\Delta_{\min}$ & $\SI{2e-9}{s^{-1}}$ & Viscous regularization threshold \\
\bottomrule
\end{tabular}
\end{table}

\subsection{Numerical results}
We evaluate the LDG sea ice model by comparing the obtained shear deformation
$$\beps_{II} = \sqrt{(\beps_{11} - \beps_{22})^2 + 4 \beps_{12}^2},
$$with the corresponding results obtained using the neXtSIM-DG \cite{richter2023dynamical}, ICON~\cite{Mehlmann2021,iconorg}, and Gascoigne~\cite{Braack2021} CR-Q0 models. ICON and Gascoigne data were taken from the open access repository \url{https://data.mendeley.com/datasets/kj58y3sdtk/1} released with publication \cite{MehlmannLKF2021}. 

The LDG method was implemented with the python NGsolve software, and computed with $\Delta t=6$ min, $\alpha = \beta = \frac{1}{\Delta \tau} = 1000$ and $N_{mEVP} = 400$ subiterations. Results are presented for order 1 (\textbf{LDG-P1}, \textbf{LDG-Q1}) or order 2 (\textbf{LDG-P2}, \textbf{LDG-Q2}) velocity DG approximation spaces, and compatible stress and strain spaces (see subsection \ref{sec:compatibility}).

The neXtSIM-DG (\textbf{cG/DG}) method uses a continuous Q1 or Q2 finite element discretization for the velocity $\bu$ associated with a DG space for the stress tensor $\sig$, and a DG upwind scheme for the tracers on quadrilateral meshes. For this paper, order 0 was taken for the tracers. The physical time step is $\Delta t=2$ min, and the mEVP parameters are $\alpha = \beta = 1500$ for $N_{mEVP} = 100$ subiterations.

Both Gascoigne CR-Q0 and ICON solvers use nonconforming finite elements for the velocity; rotated bilinear element \cite{https://doi.org/10.1002/num.1690080202} on a quadrilateral mesh for Gascoigne CR-Q0, and Crouzeix-Raviart elements on a triangular mesh for ICON (see details in \cite{Mehlmann2021} and \cite{MehlmannLKF2021}). This type of discretization, which belongs to the Arakawa CD-grid class and allows  discontinuities in the velocity field, has already proven to resolve more LKFs than classical first order continuous finite elements or finite volumes discretization schemes for the momentum equation \cite{MehlmannLKF2021, Mehlmann2021}. Both Gascoigne CR-Q0 and ICON use a classical upwind scheme for the tracers. The data available from \cite{MehlmannLKF2021} are computed with a time step $\Delta t = 2$ min and $N_{mEVP} = 100$ for ICON. Gascoigne solves implicitely the viscous-plastic momentum equation \eqref{eq:momentum} \cite{Mehlmann2017}.

\begin{figure}[h!]
    \centering
        \includegraphics[width=\textwidth]{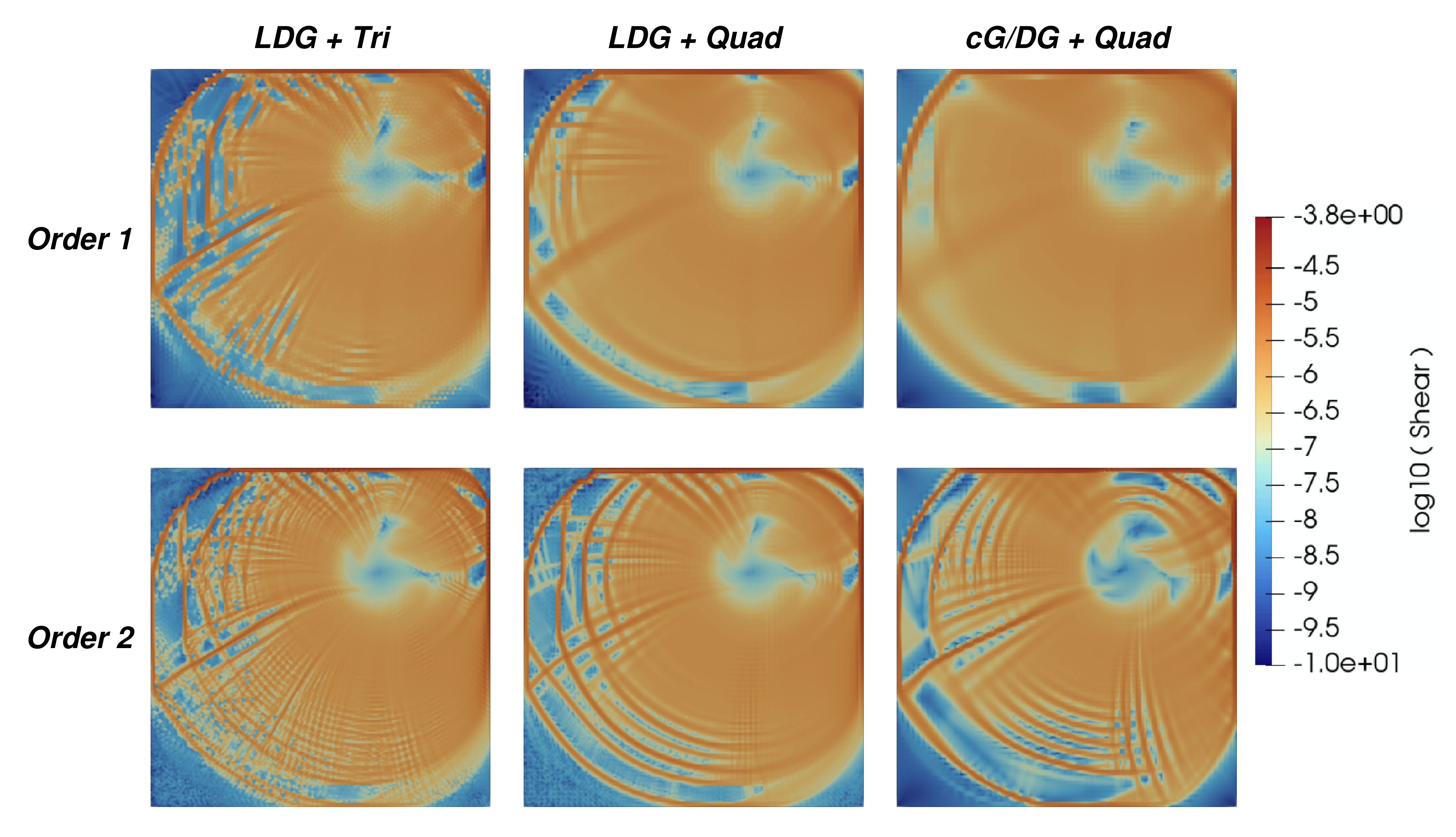}
        \caption{Simulated shear deformation at time $t=2$ days given in logarithmic scale for a $8\,\unit{km}$ grid, using the LDG model with approximations spaces of order 1 and 2 (top and bottom row) on triangular and quadrilateral elements, and  neXtSIM-DG (third column).}
        \label{8km}
\end{figure}
\begin{figure}
    \begin{subfigure}[b]{\textwidth}
        \includegraphics[width=1\textwidth]{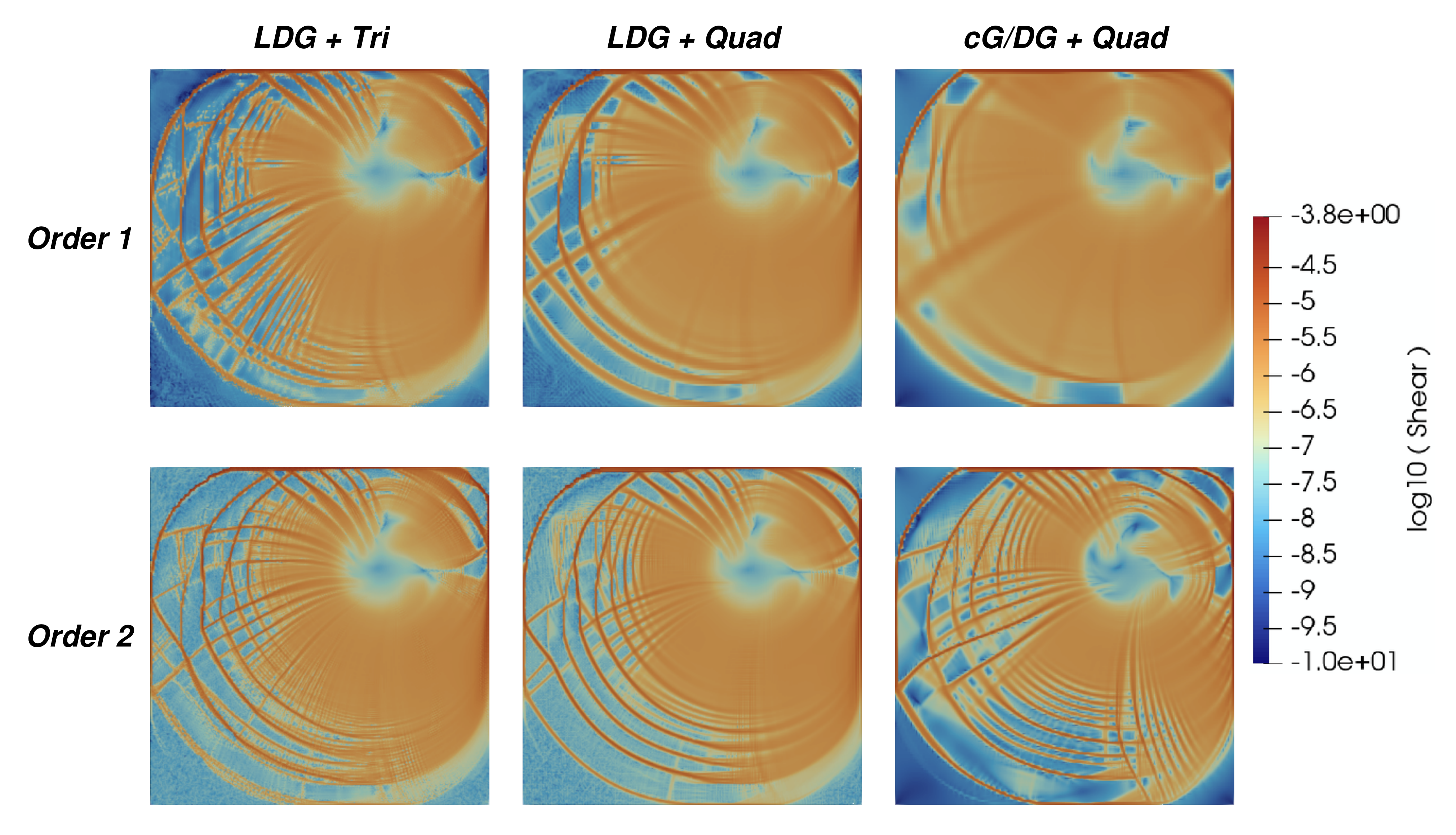}
        \caption{4 km mesh size}
        \label{4km}
    \end{subfigure}%
    \hfill
    \begin{subfigure}[b]{\textwidth}
        \centering
        \includegraphics[width=1\textwidth]{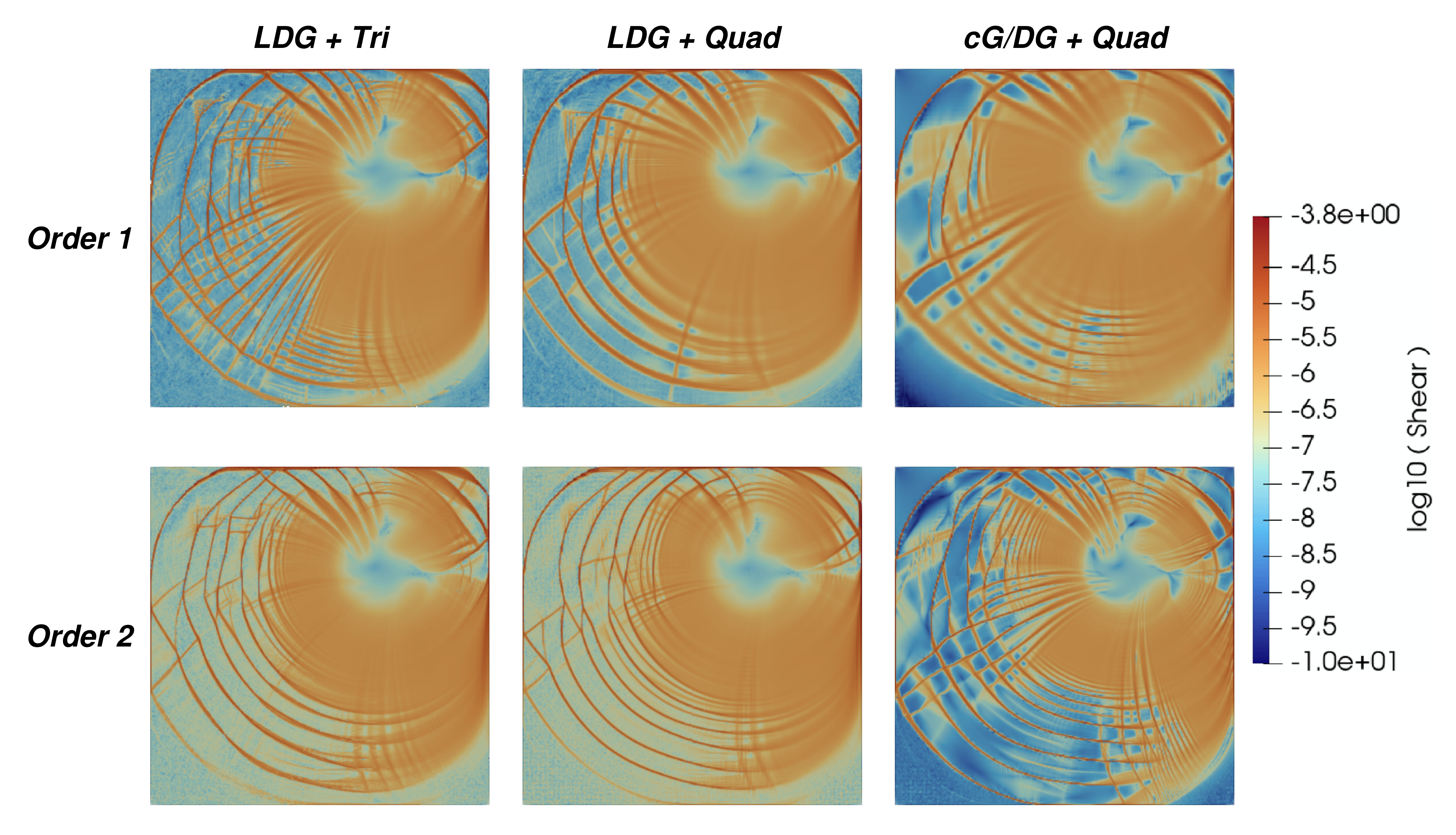}
        \caption{2 km mesh size}
        \label{2km}
    \end{subfigure}
    \caption{Simulated shear deformation at time $t=2$ days given in logarithmic scale for 4 km and 2 km grids, using the LDG model with approximations spaces of order 1 and 2 (top and bottom row) on triangular and quadrilateral elements, and the neXtSIM-DG software (third column), which uses continuous approximation spaces for the velocity and discontinuous spaces for the stress tensor on quadrilateral elements.}
\end{figure}
\begin{figure}[h!]
    \centering
        \includegraphics[width=\textwidth]{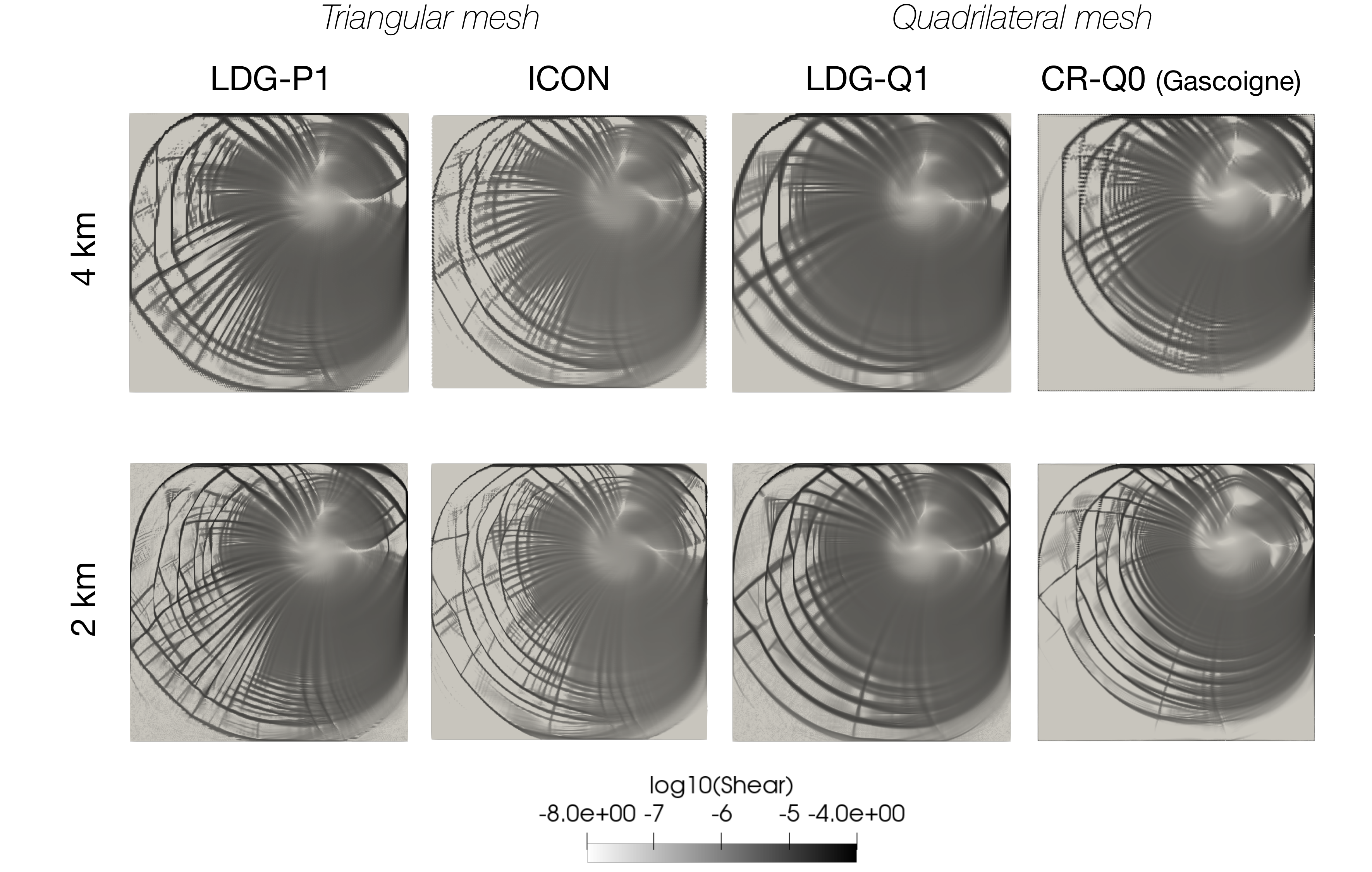}
        \caption{Simulated shear deformation at $t=2$ days obtained with the ICON and \textbf{LDG-P1} models on triangular 2 and 4 km grids, and with the Gascoigne CR-Q0 and \textbf{LDG-Q1} models on quadrilateral 2 and 4 km grids. ICON and Gascoigne data were taken from the open access repository \url{https://data.mendeley.com/datasets/kj58y3sdtk/1} released with publication \cite{MehlmannLKF2021}. }
        \label{fig:ICON LDG CR o1}
\end{figure}
\begin{figure}[h!]
    \centering
        \includegraphics[width=\textwidth]{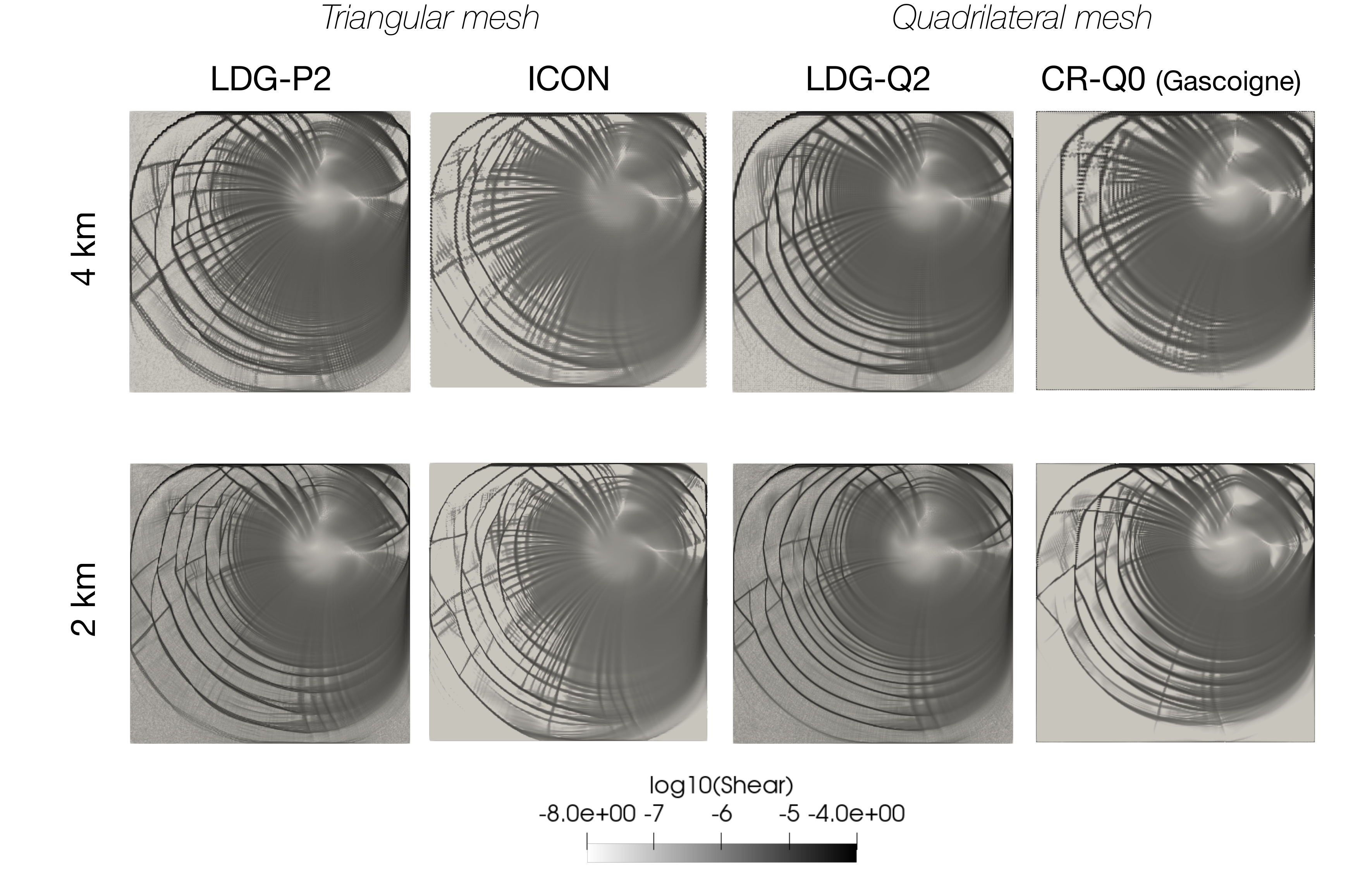}
        \caption{Simulated shear deformation at $t=2$ days obtained with the ICON and \textbf{LDG-P2} models on triangular 2 and 4 km grids, and with the Gascoigne CR-Q0 and \textbf{LDG-Q2} models on quadrilateral 2 and 4 km grids. ICON and Gascoigne data were taken from the open access repository \url{https://data.mendeley.com/datasets/kj58y3sdtk/1} released with publication \cite{MehlmannLKF2021}. }
        \label{fig:ICON LDG CR o2}
\end{figure}
Figures \ref{8km}, \ref{4km}, and \ref{2km} compare the first-order \textbf{LDG-P1} and \textbf{LDG-Q1} solutions, as well as the second-order \textbf{LDG-P2} and \textbf{LDG-Q2} solutions, with the corresponding neXtSIM-DG outputs on 8, 4 and 2 km grids, respectively. Figures \ref{fig:ICON LDG CR o1} and \ref{fig:ICON LDG CR o2} present comparisons between the LDG, ICON, and Gascoigne CR-Q0 results on 4 and 2 km grids. Figure \ref{fig:ICON LDG CR o1} shows the first-order LDG solutions, whereas Figure \ref{fig:ICON LDG CR o2} presents the second-order LDG solutions.

\paragraph{Remark} \textit{Solutions on quadrilateral meshes, represented elementwise by bilinear (Q1) or biquadratic (Q2) basis functions, present smoother shear and stress fields than triangular meshes solutions of the same order, which are linear (P1) or quadratic (P2) by element. 
However, owing to the compatibility requirements between the velocity and strain spaces discussed in subsection \ref{sec:compatibility}, quadrilateral discretizations require larger strain spaces, namely $(W^1(\mathcal{T}_h))$ or $(W^2(\mathcal{T}_h))$, while triangular discretizations use $(W^0(\mathcal{T}_h))$ or $(W^1(\mathcal{T}_h))$, resulting in an increased computational cost. }

\paragraph{Advantage of the discontinous representation of variables} 
The differences between the fully discontinuous LDG discretization and the hybrid continuous–discontinuous (neXtSIM-DG) approach are most pronounced on coarse meshes and for low-order approximations (see Figures \ref{8km}, \ref{4km} and \ref{2km}). In particular, the LDG solutions exhibit more and sharper linear kinematic features (LKFs), owing to the natural discontinuities permitted by the velocity space. Such discontinuities are already visible on coarse 8 km meshes at low order, especially for triangular elements, although some of the resulting LKFs appear to be numerical artifacts, as discussed below. As the mesh is refined or the approximation order is increased, the differences between the discretizations become less marked, consistently with the expectation that all approximation spaces converge toward the same limiting solution. For the high-order LDG solutions on the 2 km mesh (Figure \ref{2km}), weak background noise is observed in regions of nearly vanishing shear deformation. This noise may result from a pseudo-time step $\Delta\tau$ that is slightly too large, causing small instabilities in the mEVP inner iterations; however, it does not noticeably affect the resolved LKFs. Nevertheless, the LDG formulation produces sharper and more localized shear-deformation bands than those obtained with neXtSIM-DG, even with high order approximation and thinner mesh, as expected from its fully discontinuous representation.

Compared with the ICON and Gascoigne CR-Q0 models, the LDG method produces a similar number of LKFs at both first and second order, as shown in Figures \ref{fig:ICON LDG CR o1} and \ref{fig:ICON LDG CR o2}. ICON and Gascoigne CR-Q0 already generate numerous sharp LKFs as a consequence of the nonconforming elements employed, which also allowed (partial) discontinuities in the velocity space (see \cite{Mehlmann2021}).

\paragraph{Robustness with respect to the grid type}
The use of both triangular and quadrilateral meshes makes it possible to assess the sensitivity of the LDG and nonconforming Crouzeix--Raviart discretizations (and more generally of the numerical sea-ice dynamics) to the grid type and approximation order. Comparing the LDG solutions obtained on quadrilateral and triangular meshes in Figures \ref{8km}, \ref{4km}, and \ref{2km}, we observe that, at first order, the orientation and location of the LKFs differ substantially between the two grid types. These differences remain consistent across the three mesh resolutions. Moreover, the LDG solutions on quadrilateral meshes share more LKFs patterns with the neXtSIM-DG solutions, which are also computed on quadrilateral grids, than with the same order LDG solutions on triangular meshes.

A similar grid dependence is apparent in the comparison with ICON, based on triangular elements, and Gascoigne CR-Q0, based on quadrilateral elements. As shown in Figure \ref{fig:ICON LDG CR o1}, the first-order LDG solutions exhibit patterns resembling those produced by the corresponding nonconforming discretization on the same grid type. Consequently, the triangular and quadrilateral mesh solutions form two distinct groups in terms of LKF orientation and spatial distribution.

These observations also suggest that the total number or cumulative length of LKFs, quantities commonly used to compare numerical methods for sea ice dynamics, may not be reliable indicators when first-order approximations are considered. Indeed, numerous apparently nonphysical features are present in the first-order solutions. Here, we classify as numerical artifacts the LKFs that disappear when the approximation order and mesh resolution is increased. Such artifacts are particularly prominent on triangular meshes, notably in the ICON and \textbf{LDG-P1} solutions shown in Figure \ref{fig:ICON LDG CR o1}.

By contrast, the LKF patterns obtained with the second-order LDG discretizations, as well as with second order neXtSIM-DG, remain broadly consistent across grid types; see Figures \ref{fig:ICON LDG CR o2}, \ref{4km} and \ref{2km}. This suggests that mesh-aligned LKFs are primarily associated with first-order spatial approximations and that a second-order fully discontinuous Galerkin discretization can, accordingly to expectations, substantially reduce this grid-dependence in addition to improving the qualitative accuracy of the solutions, even on relatively coarse meshes.


\section{Discussion}
A first contribution of this work has concerned the convergence of the mEVP formulation. Previous studies established conditions for the stability of spatially discretized mEVP schemes and demonstrated numerically that the discrete mEVP solution converges toward the corresponding discrete VP solution, when a sufficiently large number of pseudo-time subiterations is performed. In this paper, we have complemented these results by studying the underlying pseudo-time continuous system independently of any particular spatial discretization. We proved its convergence toward the viscous-plastic limit at a fixed physical time. This result provides a theoretical justification for the use of mEVP as a pseudo implicit solver for the VP model.

In a second time, we introduced and assessed a fully discontinuous Galerkin formulation for solving Hibler's viscous-plastic sea ice model. The proposed method is based on the local discontinuous Galerkin framework and represents all variables, including the velocity, in discontinuous finite-element spaces. The method was evaluated using a standard sea ice dynamical benchmark and was shown to reproduce the expected sharp and numerous linear kinematic features, although at a higher computational cost.

The numerical experiments also highlighted the influence of the grid type and approximation order on the simulated ice deformation field. For first order polynomial approximation of the velocity, the number, orientation, and spatial distribution of the LKFs depend strongly on whether triangular or quadrilateral elements are used. Solutions obtained with different numerical methods but based on the same grid geometry exhibit similar LKF patterns, indicating that part of these structures may be oriented by the mesh rather than by the underlying physical model. Some features tend to disappear when the approximation order and the mesh resolution are increased and can therefore be interpreted as numerical artifacts rather than physically meaningful LKFs. These observations also question the validity of using the number and total length of LKFs to assess sea ice dynamics numerical modeling.

By contrast, the second-order LDG solutions exhibit greater consistency across grid types and spatial resolutions. Their LKF patterns remain qualitatively similar on triangular and quadrilateral meshes and are already well resolved on relatively coarse grids. These results suggest that mesh-aligned LKFs are primarily associated with low-order spatial approximations. They also demonstrate one of the main advantages of the DG framework: the local approximation order can be increased while preserving the flexibility of unstructured meshes. A second-order fully discontinuous discretization can therefore improve the qualitative robustness of the solution, and produce sharp, localized LKFs without requiring very fine meshes.

Future work will focus on optimizing the numerical implementation of the LDG scheme, as the use of discontinuous approximation spaces introduces additional degrees of freedom and interface terms compared with continuous formulations. A natural extension would consist in implementing the LDG method within the neXtSIM-DG software, thus naturally coupling with existing high order discontinuous Galerkin methods for the tracers advection equation.

\section*{Acknowledgments}
The authors gratefully acknowledge the funding by the European Regional Development Fund (ERDF) within the program Research and Innovation - Grant Number ZS/2023/12/182075 (Center for Dynamic Systems), as well as 
the fundings from the Deutsche Forschungsgemeinschaft (DFG, German Research Foundation), Grant number 314838170, GRK 2297 MathCoRe. TR  acknowledges support by Schmidt Sciences, Grant Number G-24-67790. 

\appendix
\section{Appendix}

\begin{lemma}[Bound for the Bregman divergence]
\label{lemma: bound Bregman}
Let $\psi(\bv)=\frac13 \|\bv\|_2^3.$
Then, for all \(\bv,\bv^*\in\mathbb R^2\) we have the following pointwise inequality
\[
    D_\psi(\bv,\bv^*)
    \geq
    \frac1{12}
    \left (\|\bv\|_2+\|\bv^*\|_2\right )\|\bv -\bv^*\|_2^2.
\]
\end{lemma}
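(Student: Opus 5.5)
To prove the bound on the Bregman divergence $D_\psi(\bv,\bv^*)=\psi(\bv)-\psi(\bv^*)-\nabla\psi(\bv^*)\cdot(\bv-\bv^*)$, we will use its integral (Taylor) representation along the segment $\bv_s=\bv^*+s(\bv-\bv^*)$, $s\in[0,1]$:
\[
D_\psi(\bv,\bv^*)=\int_0^1(1-s)\,(\bv-\bv^*)^T\nabla^2\psi(\bv_s)(\bv-\bv^*)\,ds .
\]
This formula is valid even if the segment passes through the origin, since $\nabla\psi(\bv)=\|\bv\|_2\bv$ is $C^1$ away from $0$ and Lipschitz near $0$. From the Hessian formula already used in Lemma~\ref{lemma: control hessian}, namely $\nabla^2\psi(\bv)=\|\bv\|_2 I+\frac{\bv\otimes\bv}{\|\bv\|_2}\succeq\|\bv\|_2 I$, we obtain
\[
D_\psi(\bv,\bv^*)\ge \|\bv-\bv^*\|_2^2\int_0^1(1-s)\|\bv_s\|_2\,ds .
\]
So the whole proof reduces to the scalar estimate
\[
\int_0^1(1-s)\|\bv^*+s(\bv-\bv^*)\|_2\,ds\ \ge\ \tfrac1{12}\left(\|\bv\|_2+\|\bv^*\|_2\right).
\]

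This scalar estimate is the main obstacle, because $\|\bv_s\|_2$ may vanish somewhere on the segment, so we cannot simply bound it from below by a convex combination of the endpoint norms. To handle it, we work with the affine path $s\mapsto\bv_s$ in $\mathbb R^2$ and exploit the convexity and $1$-Lipschitz property of $\|\cdot\|_2$. For the reverse triangle bound we use
\[
\|\bv_s\|_2\ge\bigl|(1-s)\|\bv^*\|_2-s\|\bv\|_2\bigr|,
\]
which follows from $\|a+b\|\ge|\|a\|-\|b\||$ applied with $a=(1-s)\bv^*$ and $b=s\bv$. Setting $p=\|\bv^*\|_2$ and $q=\|\bv\|_2$, it then suffices to show
\[
\int_0^1(1-s)\,|(1-s)p-sq|\,ds\ \ge\ \frac{p+q}{12}\qquad\text{for all } p,q\ge 0 .
\]
The left side is positively homogeneous of degree one, so we normalize $p+q=1$ and check the one-variable function $g(p)=\int_0^1(1-s)|(1-s)p-s(1-p)|\,ds$ on $[0,1]$.

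This is an explicit piecewise-polynomial computation. The kink sits at $s_0=p$, since $(1-s)p-s(1-p)=p-s$. Hence $g(p)=\int_0^1(1-s)|p-s|\,ds$, which equals
\[
\frac{p^3}{3}-\frac{p^2}{2}+\frac p2-\frac16+\dots
\]
Rather than carrying this out symbolically, we will compute the minimum directly. The derivative is
\[
g'(p)=\int_0^p(1-s)\,ds-\int_p^1(1-s)\,ds=\Bigl(p-\tfrac{p^2}{2}\Bigr)-\tfrac{(1-p)^2}{2},
\]
which vanishes at $p=1-1/\sqrt2$. Evaluating gives $g_{\min}\approx 0.098>1/12\approx 0.083$. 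At the endpoints, $g(0)=1/3$ and $g(1)=1/6$. This confirms the constant $1/12$ with some margin. If the reverse triangle step turns out too lossy in some configuration, we fall back on $\|\bv_s\|_2\ge|\,\|\bv^*\|_2-s\|\bv-\bv^*\|_2|$ and perform a similar one-dimensional minimization.
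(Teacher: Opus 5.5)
Your proof is correct and follows the paper's proof essentially step for step: Taylor's formula with integral remainder, the bound $\nabla^2\psi(\bv)\succeq\|\bv\|_2 I$, the reverse triangle inequality $\|(1-s)\bv^*+s\bv\|_2\ge\bigl|(1-s)\|\bv^*\|_2-s\|\bv\|_2\bigr|$, normalization to $p+q=1$, and minimization of $\int_0^1(1-s)|p-s|\,ds$ at $p=1-1/\sqrt2$. There are only cosmetic slips, none of which affects the conclusion: the abandoned polynomial should read $-\frac{p^3}{3}+p^2-\frac p2+\frac16$, the endpoint values are swapped ($g(0)=\frac16$, $g(1)=\frac13$), and the exact minimum is $\frac{2-\sqrt2}{6}>\frac1{12}$, so the fallback you mention is never needed.
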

\begin{proof}
Define the pointwise Bregman divergence associated with \(\psi\) by $ D_\psi(\bv,\bv^*)
    =
    \psi(\bv)-\psi(\bv^*)-\nabla\psi(\bv^*)\cdot(\bv-\bv^*).$
By Taylor's formula with integral remainder, and taking $h=\bv-\bv^*$, we have
\[
    D_\psi(\bv,\bv^*)
    =
    \int_0^1
    (1-t)\,
    h^T\nabla^2\psi(\bv^*+th)h\,dt.
\]
Since $\nabla^2\psi(\bv)=
    \|\bv\|_2 I+\frac{\bv\otimes \bv}{\|\bv\|_2} \succeq
    \|\bv\|_2 I$, we get
$$
    D_\psi(\bv,\bv^*)\geq \|h\|_2^2 \int_0^1
    (1-t)\|\bv^*+t h\|_2\,dt.
$$
Moreover, $\bv^*+t h=(1-t)\bv^*+t \bv$, then,
\[
\|(1-t)\bv^*+t\bv\|_2 \geq \bigl| (1-t) \|\bv^*\|_2-t\|\bv\|_2 \bigr|.
\]
Therefore,
\[
    D_\psi(\bv,\bv^*)
    \geq
    \|h\|_2^2 \int_0^1 (1-t)
    \bigl| (1-t)\|\bv^*\|_2-t\|\bv\|_2 \bigr|\,dt.
\]
We set $a=\|\bv^*\|_2\ge 0$ and $b=\| \bv\|_2 \ge0$.
If \(a=b=0\), then \(\bv=\bv^*=0\), and the result is immediate. Otherwise,
\[
    \bigl| (1-t)a-tb \bigr|
    =
    (a+b)\left| \frac{a}{a+b}-t\right|,
\]
hence we have
\[
    D_\psi(\bv,\bv^*)
    \geq
    (a+b)\|h\|_2^2
    \int_0^1
    (1-t)\left |\frac{a}{a+b}-t\right |\,dt.
\]
We can compute the integral to get
\[
    \int_0^1
    (1-t)\left |\frac{a}{a+b}-t\right |\,dt
    =
    -\frac{1}{3}\left (\frac{a}{a+b}\right )^3
    +\left (\frac{a}{a+b}\right )^2
    -\frac{1}{2} \left (\frac{a}{a+b}\right )
    +\frac16.
\]
The function $f: x \mapsto f(x) = -\frac{1}{3}x^3+x^2-\frac{1}{2} x+\frac16$ is bounded from below on \( [0,1]\). On this interval, its local extrema are localized in $0,\, 1$ and $1- \frac{\sqrt{2}}{2}$ with
\[f(x) = \int_0^1(1-t)|x-t|\,dt \geq \frac1{12},\quad \forall x\in [0, 1].
\]
Finally, we have the lower bound
\[
    D_\psi(\bv,\bv^*)
    \geq
    \frac1{12}
    \left (\|\bv\|_2+\|\bv^*\|_2\right )\|\bv -\bv^*\|_2^2.
\]
\end{proof}

\begin{prop}[Stability of the mEVP LDG scheme for a positive symmetric definite matrix rheology]\label{prop:stability}
For $\sig^{VP}(\beps) = L\beps$ with $L$ a symmetric definite positive, such that $(., L^{-1}.)$ defines a scalar product, the following energy estimate for the semi-discretized system \eqref{semi discrete mEVP} holds
    \begin{equation}
    \label{E bound}
    \begin{aligned}
E_h \leq E_h(0)e^{-2\tau} + \int_0^t e^{2\tau} \int_{\mathcal{T}_h} A\left ( \frac{2}{3}\|\bu_a\|^3 + \frac{1}{4}\|\bu_o\|^3 + \|\bu_a\|^2\|\bu_o\| \right )\,d\tau,
\end{aligned}
\end{equation}
for $E_h = \frac{1}{2}\|\bu_h\|^2 + \frac{1}{2}(\sig_h, L^{-1} \sig_h)$, $(., .)$ denoting the scalar product in $V^n(\mathcal{T}_h)$ or $W^m(\mathcal{T}_h)$.
\end{prop}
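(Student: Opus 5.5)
The plan is a discrete energy argument for the semi-discrete system \eqref{semi discrete mEVP}, with the test functions chosen so that the LDG face terms cancel up to nonnegative jump penalties. I will test \eqref{u line sd} with $\bv_h=\bu_h$ and \eqref{sig line sd} with $\bt_h = L^{-1}\sig_h$, which lies in $W^m(\mathcal{T}_h)$ if $L$ maps symmetric tensors to symmetric tensors with constant coefficients; if not, I would use the $L^2$-projection. This gives
\[
\tfrac12\tfrac{d}{d\tau}\|\bu_h\|^2+\|\bu_h\|^2+\Big[\textstyle\sum_K(\sig_h,\nabla\bu_h)_K-\text{face terms}\Big]=(F(\bu_h),\bu_h),
\]
\[
\tfrac12\tfrac{d}{d\tau}(\sig_h,L^{-1}\sig_h)+(\sig_h,L^{-1}\sig_h)=(L\beps_h,L^{-1}\sig_h)=(\beps_h,\sig_h).
\]
The term $(\beps_h,\sig_h)$ is then rewritten with the strain equation \eqref{eps line sd}, tested with $\btau_h=\sig_h$. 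It expresses $(\beps_h,\sig_h)$ as $\sum_K(\nabla\bu_h,\sig_h)_K$ plus interior face terms involving $\widehat{\bu}_h$ and $\ljp\sig_h n\rjp$, and boundary terms $\langle \bu_h,\sig_h n\rangle$.

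The core step, and the main obstacle, is to subtract the two relations and check that the fluxes are compatible, i.e.\ antisymmetric in the usual LDG sense. Since $\sig_h$ is symmetric, $(\sig_h,\nabla\bu_h)_K=(\sig_h,\beps(\bu_h))_K$, and the volume terms cancel. On interior faces I will use the identity $\ljp \bu\cdot\sig n\rjp=\{\!\{\bu\}\!\}\cdot\ljp\sig n\rjp+\ljp\bu\rjp\cdot\{\!\{\sig n\}\!\}$. With the choices $\widehat{\bu}=\{\!\{\bu\}\!\}+a\ljp\bu\rjp$ and $\widehat{\sig}n=\{\!\{\sig n\}\!\}-a\ljp\sig n\rjp-b\ljp\bu\rjp$, the $a$-terms cancel pairwise. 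What remains is $b\|\ljp\bu_h\rjp\|^2_F\ge 0$ on the left-hand side. On boundary faces, with $\widehat{\bu}=0$ and $\widehat\sig n=\sig n-\frac{b}{0.5-a}\bu$, the terms $\langle\sig_h n,\bu_h\rangle$ cancel and leave $\frac{b}{0.5-a}\|\bu_h\|^2_F\ge0$, which is nonnegative because $a<1/2$. I would carry out this sign bookkeeping carefully, since the writing of \eqref{eps line sd} contains the extra term $\ljp\langle\bu_h,\btau_h n\rangle\rjp$. The outcome should be
\[
\tfrac{d}{d\tau}E_h+\|\bu_h\|^2+(\sig_h,L^{-1}\sig_h)+\text{(jump penalties)}\le (F(\bu_h),\bu_h).
\]

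Next, the forcing. I take the wind drag in the approximated form $A\|\bu_a\|\bu_a$ (or the full form) and the ocean drag $A\|\bu_o-\bu_h\|(\bu_o-\bu_h)$. The Coriolis term $\vec k\times(\cdot)$ is orthogonal to $\bu_h$ and drops out, apart from a $\bu_o$ part that I would treat in the same way as the other data terms. For the ocean drag I write $\|\bu_o-\bu\|(\bu_o-\bu)\cdot\bu=-\|\bu_o-\bu\|^3+\|\bu_o-\bu\|(\bu_o-\bu)\cdot\bu_o$. The negative cubic then absorbs the remainder via Young's inequality with exponents $3/2$ and $3$, leaving a multiple of $\|\bu_o\|^3$. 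The wind term $\|\bu_a\|^2\bu_a\cdot\bu$, or its full variant, is handled with Young's inequality against $\|\bu_h\|^2$ together with the ocean cubic, which produces the data combination $\frac23\|\bu_a\|^3+\frac14\|\bu_o\|^3+\|\bu_a\|^2\|\bu_o\|$. I will not try to match these constants exactly, only to show that the bound has this form with pointwise-in-space data integrated against $A$.

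Finally, I drop the jump penalties and use $\|\bu_h\|^2+(\sig_h,L^{-1}\sig_h)=2E_h$, keeping just enough of the damping to absorb the Young terms in $\bu_h$. This gives $\frac{d}{d\tau}E_h+2E_h\le G(\tau)$, and Grönwall's lemma yields $E_h(\tau)\le E_h(0)e^{-2\tau}+\int_0^\tau e^{-2(\tau-s)}G(s)\,ds$. This implies \eqref{E bound}, reading the stated weight $e^{2\tau}$ as the crude bound of this kernel. If absorbing the forcing consumes part of the damping, the rate degrades from $2$ to a smaller constant, and I would note this as a possible correction of the constant.
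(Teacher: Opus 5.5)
Your energy identity is exactly the paper's. With $\bv_h=\bu_h$, $\bt_h=L^{-1}\sig_h$, $\btau_h=\sig_h$, the volume terms cancel and the $a$-terms cancel pairwise, which gives $\frac{d}{d\tau}E_h+2E_h+b\sum_{F\in\mathcal{F}_h^i}\|\ljp\bu_h\rjp\|_{L^2(F)}^2+\frac{b}{0.5-a}\sum_{F\in\mathcal{F}_h^b}\|\bu_h\|_{L^2(F)}^2=(F(\bu_h),\bu_h)$. The gap is in the forcing step. The statement claims the full rate $2$ with data $\frac23\|\bu_a\|^3+\frac14\|\bu_o\|^3+\|\bu_a\|^2\|\bu_o\|$. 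Handling the wind term by Young's inequality against $\|\bu_h\|^2$ cannot give this: it uses up part of the damping, so the rate drops below $2$ as you suspect. It also leaves a data term of type $A^2\|\bu_a\|^4$ rather than $\|\bu_a\|^3$ and $\|\bu_a\|^2\|\bu_o\|$.

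The missing idea is that no damping is needed at all. By the triangle inequality, $A\|\bu_a\|\,\bu_a\cdot\bu\le A\|\bu_a\|^2\|\bu-\bu_o\|+A\|\bu_a\|^2\|\bu_o\|$. Keep the last term as data; this is exactly where $\|\bu_a\|^2\|\bu_o\|$ comes from. Then absorb $\|\bu_a\|^2\|\bu-\bu_o\|\le\frac13\|\bu-\bu_o\|^3+\frac23\|\bu_a\|^3$ into the ocean-drag cubic $-A\|\bu-\bu_o\|^3$, as you already do for $A\|\bu-\bu_o\|^2\|\bu_o\|$. After discarding a nonpositive cubic, $(F(\bu_h),\bu_h)$ is bounded by pure data, so $\frac{d}{d\tau}E_h+2E_h\le G$ holds with the full damping and Gr\"onwall concludes. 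Your reading of the $e^{2\tau}$ weight as a crude majorant of $e^{-2(\tau-s)}$ is the right one.

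There are three smaller points.
\begin{enumerate}
\item The paper's proof works with $F(\bu)=-\rho_{\mathrm{ice}}Hf\vec k\times\bu-A\|\bu-\bu_o\|(\bu-\bu_o)+A\|\bu_a\|\bu_a$, so it simply omits the $\vec k\times\bu_o$ Coriolis contribution. Estimating that term as data, as you propose, necessarily adds something not in the stated bound. Absorbing it into the cubic produces a factor $A^{-1/2}$, and absorbing it into the damping lowers the rate. Matching the statement therefore requires the same simplified forcing.
\item Your $L^2$-projection fallback for non-constant $L$ does not work. Testing with $\Pi(L^{-1}\sig_h)$ turns the right-hand side into $(L\beps_h,\Pi(L^{-1}\sig_h))$, which is not $(\beps_h,\sig_h)$ unless $L\beps_h$ or $L^{-1}\sig_h$ lies in $W^m(\mathcal{T}_h)$. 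The argument needs $L$ constant, as the paper tacitly assumes.
\item You are right not to chase the constant $\frac14$. The paper's Young step $\|\bu-\bu_o\|^2\|\bu_o\|\le\frac19\|\bu-\bu_o\|^3+\frac14\|\bu_o\|^3$ fails at $\|\bu-\bu_o\|=\|\bu_o\|=1$. Spending the remaining $\frac23$ of the cubic instead gives the valid bound $\frac23\|\bu-\bu_o\|^3+\frac13\|\bu_o\|^3$, that is, $\frac13\|\bu_o\|^3$ in the data.
\end{enumerate}
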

\begin{proof}
Choosing $\boldsymbol{\tau}_h = \boldsymbol{\sigma}_h$ in both \eqref{eps line sd}, $\mathbf{v}_h = \bu_h$ in \eqref{u line sd}, and $\bt_h = L^{-1} \boldsymbol{\sigma}_h$ in \eqref{sig line sd} yields the following
\begingroup
\begin{align}
 \sum_{K} ( \partial_\tau \mathbf{u}_h, \mathbf{u}_h)_K+ \sum_{K} (\mathbf{u}_h, \mathbf{u}_h)_K
+ \sum_{K} (\boldsymbol{\sigma}_h, \nabla \mathbf{u}_h)_K\qquad\qquad\nonumber \\ -\sum_{F \in \mathcal{F}_h^i} \langle \{\!\{\sig n\}\!\} - a \ljp \sig n \rjp
- b [\![\bu]\!]), \ljp \mathbf{u}_h  \rjp\rangle_{F}\qquad \nonumber \\§
-\sum_{F \in \mathcal{F}_h^b} \langle \boldsymbol{\sigma}_h \mathbf{n}- \frac{b}{0.5-a} \bu_h , \mathbf{u}_h \rangle_{F}
&= \sum_{K} (F(\bu_h), \mathbf{u}_h)_K,
\label{u line}\\
\sum_K (\partial_\tau \boldsymbol{\sigma}_h, L^{-1} \sig_h)_K
+\sum_K (\boldsymbol{\sigma}_h, L^{-1}\boldsymbol{\sigma}_h)_K
&= \sum_K (\beps_h, \sig_h)_K, \label{sig line}\\
\sum_{K} (\beps_h, \sig_h)_K - \sum_{K} (\nabla \mathbf{u}_h, \sig_h)_K \qquad \nonumber\\
 - \sum_{F \in \mathcal{F}_h^i}  \left[ \langle \{\!\{\bu \}\!\} + a \ljp \bu\rjp), \ljp \sig_h \mathbf{n} \rjp \rangle_F - \ljp \langle \mathbf{u}_h, \sig_h \mathbf{n} \rangle_F \rjp \right] + \sum_{F \in \mathcal{F}_h^b}   \langle \mathbf{u}_h, \sig_h \mathbf{n} \rangle_F& =0. \label{eps line}
\end{align}
\endgroup
Summing all three equations, we obtain the following estimate
\begin{equation}
\label{eq:final-energy}
\begin{aligned}
\frac{d}{d\tau}E_h + 2 E_h + b \sum_{F \in \mathcal{F}_h^i}
\|[\![\mathbf{u}_h]\!]\|^2_{L^2(F)}
+ \frac{b}{0.5-a} \sum_{F \in \mathcal{F}_h^b}
\|\mathbf{u}_h]\|^2_{L^2(F)}
=  ( F(\bu_h), \bu_h),
\end{aligned}
\end{equation} with $E_h = \frac{1}{2}\|\bu_h\|^2 + \frac{1}{2}(\sig_h, L^{-1} \sig_h)$, and $(., .)$ denoting the scalar product in $V^n(\mathcal{T}_h)$ or $W^m(\mathcal{T}_h)$. 

The term $F(\bu_h)$ writes (up to multiplicative constants with respect to $\bu_h$)
\begin{equation}
    F(\bu) = - \rho_\text{ice} H f\vec k \times \bu - A \|\bu - \bu_o\|_2 (\bu - \bu_o) + A \|\bu_a\|_2 \bu_a.
\end{equation}The first term is canceled by the scalar product with $\bu$ in the energy estimate: $(\vec k \times \bu) \cdot  \bu = 0$. Pointwise, we have
\begin{equation}
    \begin{aligned}
        F(\bu) \cdot \bu &= - A\|\bu - \bu_o\| (\bu-\bu_o)\cdot (\bu-\bu_o + \bu_o) + A \|\bu_a\| \bu_a \cdot \bu\\
        &\leq - A\|\bu - \bu_o\|^3 + A\|\bu - \bu_o\|^2 \|\bu_o\| + A \|\bu_a\|^2 \|\bu\|\\
        & \leq - A\|\bu - \bu_o\|^3 + A\|\bu - \bu_o\|^2 \|\bu_o\| +A \|\bu_a\|^2 \|\bu-\bu_o\| +A \|\bu_a\|^2\|\bu_o\|.
    \end{aligned}
\end{equation}Using generalized Young's inequality
$$a b \leq \frac{1}{(q\delta)^{p/q}}\frac{a^p}{p} + \delta\frac{b^q}{q} \quad \text{with }1/p + 1/q = 1 \text{ and }\delta>0 $$for 
$p=3/2, \quad q = 3,$ $a= \|\bu - \bu_o\|^2$, $b=\|\bu_o\|$ and $\delta = 3 \times 4$, we get
$$A\|\bu - \bu_o\|^2 \|\bu_o\| \leq A \left ( \frac19 \|\bu -\bu_o\|^3 + \frac14 \|\bu_o\|^3 \right ).
$$Now with $p=3, \quad q = 3/2,$ $a= \|\bu - \bu_o\|$, $b=\|\bu_a\|^2$ and $\delta = 1$, wet get for the air forcing terms
$$A \|\bu_a\|^2 \|\bu-\bu_o\| \leq A \left (\frac{2}{3} \|\bu_a\|^3 + \frac{1}{3}\|\bu-\bu_o\|^3 \right ).
$$Finally, we obtain the following bound
\begin{equation}
    \begin{aligned}
        F(\bu) \cdot \bu & \leq - \frac{5}{9}A\|\bu-\bu_o\|^3 + A \left ( \frac{2}{3}\|\bu_a\|^3 + \frac{1}{4}\|\bu_o\|^3 + \|\bu_a\|^2\|\bu_o\| \right ),
    \end{aligned}
\end{equation}which can be inserted in the energy estimate \eqref{eq:final-energy} to form the following differential inequality
\begin{equation}
    \label{ineq energy 1}
    \begin{aligned}
\frac{d}{d\tau}E_h + 2 E_h
\leq  \int_{\mathcal{T}_h} A\left ( \frac{2}{3}\|\bu_a\|^3 + \frac{1}{4}\|\bu_o\|^3 + \|\bu_a\|^2\|\bu_o\| \right ),
\end{aligned}
\end{equation}leading to the energy bound \eqref{E bound}.
\end{proof}


\printbibliography

\end{document}